\pgfplotsset{compat=1.14}
\title{Existence of nontrival $n$-harmonic maps via min-max methods}
\author{Dorian Martino}
\address[Dorian Martino]{
Department of Mathematics, ETH Zurich, Rämistrasse 101, 8092 Zurich, Switzerland}
\email{dorian.martino@math.ethz.ch}
\author{Katarzyna Mazowiecka}
\address[Katarzyna Mazowiecka]{
Institute of Mathematics,%
University of Warsaw,
Banacha 2,
02-097 Warszawa, Poland}
\email{k.mazowiecka@mimuw.edu.pl}
\author{Armin Schikorra}
\address[Armin Schikorra]{Department of Mathematics,
University of Pittsburgh,
301 Thackeray Hall,
Pittsburgh, PA 15260, USA}
\email{armin@pitt.edu}
\definecolor{indigo}{rgb}{0.29, 0.0, 0.51}
\definecolor{p1}{gray}{0.4}
\definecolor{p2}{gray}{0.6}
\definecolor{p3}{gray}{0.98}
\definecolor{p4}{gray}{0.8}
\definecolor{p5}{gray}{0.9}
\def\eps{\varepsilon}
\def\B{{B}}
\def\n{{\mathcal M}}
\newcommand{\dif}{\,\mathrm{d}}
\def\N{{\mathbb N}}
\def\n{{\mathcal{M}}}
\def\n{{\mathcal N}}
\def\S{{\mathbb S}}
\renewcommand{\div}{{\rm div}}
\newtheorem{theorem}{Theorem}
\newtheorem{lemma}[theorem]{Lemma}
\newtheorem{corollary}[theorem]{Corollary}
\newcommand{\loc}{\mathrm{loc}}
\def\dist{{\rm dist\,}}
\def\supp{{\rm supp\,}}
\def\loc{{\rm loc}}
\newcommand{\dx}{\dif x}
\newcommand{\R}{\mathbb{R}}
\newcommand{\brac}[1]{\left (#1 \right )}
\newcommand{\abs}[1]{\left |#1 \right |}
\newcommand{\barint}{
\rule[.036in]{.12in}{.009in}\kern-.16in \displaystyle\int }
\newcommand{\barcal}{\mbox{$ \rule[.036in]{.11in}{.007in}\kern-.128in\int $}}
\def\mvint_#1{\mathchoice
          {\mathop{\vrule width 6pt height 3 pt depth -2.5pt
                  \kern -8pt \intop}\nolimits_{\kern -3pt #1}}%
          {\mathop{\vrule width 5pt height 3 pt depth -2.6pt
                  \kern -6pt \intop}\nolimits_{#1}}%
          {\mathop{\vrule width 5pt height 3 pt depth -2.6pt
                  \kern -6pt \intop}\nolimits_{#1}}%
          {\mathop{\vrule width 5pt height 3 pt depth -2.6pt
                  \kern -6pt \intop}\nolimits_{#1}}}
\numberwithin{theorem}{section} \numberwithin{equation}{section}
\newcommand{\aleq}{\precsim}
\newcommand{\ageq}{\succsim}
\newcommand{\mfdN}{\mathcal{N}}
\let\latexchi\chi
\renewcommand\chi{\@ifnextchar_\sub@chi\latexchi}
\newcommand{\sub@chi}[2]{
  \@ifnextchar^{\subsup@chi{#2}}{\latexchi^{}_{#2}}%
}
\newcommand{\subsup@chi}[3]{
  \latexchi_{#1}^{#3}%
}
\def\tikz@arc@opt[#1]{
  {%
    \tikzset{every arc/.try,#1}%
    \pgfkeysgetvalue{/tikz/start angle}\tikz@s
    \pgfkeysgetvalue{/tikz/end angle}\tikz@e
    \pgfkeysgetvalue{/tikz/delta angle}\tikz@d
    \ifx\tikz@s\pgfutil@empty%
      \pgfmathsetmacro\tikz@s{\tikz@e-\tikz@d}
    \else
      \ifx\tikz@e\pgfutil@empty%
        \pgfmathsetmacro\tikz@e{\tikz@s+\tikz@d}
      \fi%
    \fi
    \tikz@arc@moveto
    \xdef\pgf@marshal{\noexpand%
    \tikz@do@arc{\tikz@s}{\tikz@e}
      {\pgfkeysvalueof{/tikz/x radius}}
      {\pgfkeysvalueof{/tikz/y radius}}}%
  }%
  \pgf@marshal%
  \tikz@arcfinal%
}
\let\tikz@arc@moveto\relax
\def\tikz@arc@movetolineto#1{%
  \def\tikz@arc@moveto{\tikz@@@parse@polar{\tikz@arc@@movetolineto#1}(\tikz@s:\pgfkeysvalueof{/tikz/x radius} and \pgfkeysvalueof{/tikz/y radius})}}
\def\tikz@arc@@movetolineto#1#2{#1{\pgfpointadd{#2}{\tikz@last@position@saved}}}
\tikzset{%
  move to start/.code=\tikz@arc@movetolineto\pgfpathmoveto,%
  line to start/.code=\tikz@arc@movetolineto\pgfpathlineto}
\begin{document}

\begin{abstract}
For any $n \geq 3$ and any closed manifold $\n$ with $\pi_{n+k}(\n) \neq \{0\}$ for some $k \geq 0$, we establish the existence of nontrivial $n$-harmonic maps from $\S^n$ into $\n$. When $k\geq 1$, these maps naturally appear as bubbling limits of $p$-harmonic maps with $p > n$, obtained by min-max constructions in the limit $p \to n^+$.
\end{abstract}

\sloppy

\subjclass[2010]{58E20, 35B65, 35J60, 35S05}
\maketitle
\sloppy

\section{Introduction}
In this note, we study the existence of nontrivial $n$-harmonic maps from the sphere $\S^n$ into a smooth, compact manifold $\n \subset \R^N$ without boundary. These maps are critical points of the $n$-energy
\[
 \mathcal{E}_n(u) \coloneqq \int_{\S^n} |\nabla u|^n,\quad u\colon \S^n \to \n,
\]
i.e., distributional solutions to
\begin{equation}
 -\div(|\nabla u|^{n-2} \nabla u) = |\nabla u|^{n-2} A(u)[\nabla u,\nabla u].
\end{equation}
Here $A$ denotes the second fundamental form. Our main result is the following.
\begin{theorem}\label{th:main}
Let $n \geq 3$. If $\pi_{n+k}(\n) \neq \{0\}$ for some $k \geq 0$, then there exists a nontrivial, $C^{1,\alpha}$-regular $n$-harmonic map from $\S^n$ into $\n$, where $\alpha$ is a small positive number.
\end{theorem}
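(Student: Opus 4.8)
The plan is to regularize the conformally invariant $n$-energy by the $p$-energy $\mathcal{E}_p(u)=\int_{\S^n}|\nabla u|^p$ with $p>n$, which lives in the supercritical Sobolev regime where $W^{1,p}(\S^n,\n)$ embeds into $C^{0,1-\frac np}(\S^n)$ and is a genuine Banach manifold, run a min–max scheme for $\mathcal{E}_p$, and then let $p\to n^+$. The topology enters through the adjunction $\pi_{n+k}(\n)\cong\pi_k(\Omega^n\n)$, where $\Omega^n\n=\mathrm{Map}_*(\S^n,\n)$ is the $n$-fold based loop space: a nontrivial class $\Theta\in\pi_k(\Omega^n\n)$ is represented by a smooth sweepout $g_0\colon\S^k\to\Omega^n\n$. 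For $p>n$ I would set
\[
 c_p \;\coloneqq\; \inf_{[g]=\Theta}\ \max_{t\in\S^k}\ \mathcal{E}_p\big(g(t)\big),
\]
the infimum over continuous families $g\colon\S^k\to W^{1,p}$-based-maps homotopic to $g_0$ (when $k=0$ this is simply the infimum of $\mathcal{E}_p$ over a nontrivial homotopy class, and the direct method applies, using the compactness of $W^{1,p}\hookrightarrow C^0$ to preserve the class). First one checks that $\mathcal{E}_p$ satisfies the Palais–Smale condition on $W^{1,p}(\S^n,\n)$ for $p>n$: a Palais–Smale sequence is bounded in $W^{1,p}$, hence precompact in $C^0$ by Morrey's embedding, and strong $W^{1,p}$-convergence follows from the monotonicity of the $p$-Laplacian together with the fact that the term $|\nabla u_j|^{p-2}A(u_j)[\nabla u_j,\nabla u_j]$ on the right-hand side of the Euler–Lagrange equation is bounded in $L^1$ and is paired against a $C^0$-convergent sequence. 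Minimax theory then yields, for each $p>n$, a critical point $u_p$ of $\mathcal{E}_p$ with $\mathcal{E}_p(u_p)=c_p$, i.e.\ a $p$-harmonic map $\S^n\to\n$.

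The next step is to bound $c_p$ above and below uniformly for $p\in(n,n+\delta_0)$. The upper bound is immediate: $c_p\le\max_{t}\mathcal{E}_p(g_0(t))\to\max_{t}\mathcal{E}_n(g_0(t))<\infty$, since $g_0$ is a fixed smooth sweepout. For the lower bound I would first prove a gap theorem: there is $\epsilon_1=\epsilon_1(\n,n)>0$ such that any $p$-harmonic map $u\colon\S^n\to\n$, $p$ near $n$, with $\mathcal{E}_n(u)<\epsilon_1$ is constant. This uses the $\epsilon$-regularity estimate for $p$-harmonic maps — if the local $n$-energy is everywhere below a threshold $\epsilon_0$ then $u\in C^1(\S^n)$ with $\sup|\nabla u|\le C\,\mathcal{E}_n(u)^{1/n}$ — so that small energy confines the image of $u$ to a geodesically convex ball of $\n$, where the composition of $u$ with a strictly convex function is $p$-subharmonic; testing the $p$-harmonic map equation against the pullback of its gradient and integrating over the closed manifold $\S^n$ forces $\int_{\S^n}|\nabla u|^p=0$. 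Since a nontrivial sweepout cannot consist of maps of arbitrarily small $p$-energy (such maps have small oscillation by Morrey and hence map into a contractible ball), $c_p>0$, so $u_p$ is nonconstant; the gap theorem then gives $\mathcal{E}_n(u_p)\ge\epsilon_1$, and Jensen's inequality ($p/n>1$) yields $c_p=\mathcal{E}_p(u_p)\ge|\S^n|^{1-\frac pn}\,\mathcal{E}_n(u_p)^{p/n}\ge\eta>0$, uniformly in $p$. In particular $\mathcal{E}_n(u_p)$ stays bounded above and bounded away from $0$ as $p\to n^+$.

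Then I would pass to the limit. Up to a subsequence, $u_p\rightharpoonup u$ weakly in $W^{1,n}(\S^n,\n)$ and the energy densities $|\nabla u_p|^n$ converge weakly-$*$ to a measure concentrating, apart from $|\nabla u|^n$, on at most finitely many points $x_i$ (those carrying mass $\ge\epsilon_0$). Away from the $x_i$, the $p$-uniform $\epsilon$-regularity for $p$-harmonic maps gives $C^{1,\alpha}_{\mathrm{loc}}$ bounds, so $u_p\to u$ in $C^1_{\mathrm{loc}}(\S^n\setminus\{x_i\})$ and $u$ is $n$-harmonic there. If there is no concentration, then $u_p\to u$ in $C^1(\S^n)$, $u$ is $n$-harmonic, and $\mathcal{E}_n(u)=\lim\mathcal{E}_n(u_p)\ge\epsilon_1>0$, so $u$ is nonconstant. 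If concentration occurs at some $x_i$, I would blow up: since both the $p$-harmonic map equation and the $n$-energy are invariant under dilations of the domain, rescaling $u_p$ at $x_i$ at the concentration scale $\lambda_p\to0$ produces $p$-harmonic maps $v_p$ on exhausting balls of $\R^n$ with uniformly bounded $n$-energy and no concentration on compact sets; hence $v_p\to\omega$ in $C^1_{\mathrm{loc}}(\R^n)$, with $\omega$ an $n$-harmonic map $\R^n\to\n$ of finite and positive $n$-energy, and the removable-singularity theorem for finite-energy $n$-harmonic maps (via conformal invariance, after pulling back by stereographic projection) extends $\omega$ to a nonconstant $n$-harmonic map $\S^n\to\n$ — this is the bubbling limit alluded to in the abstract. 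In either case we have produced a nontrivial $n$-harmonic map $\S^n\to\n$; its $C^{1,\alpha}$-regularity follows from the $\epsilon$-regularity theory for $n$-harmonic maps, which applies at every point because the $n$-energy of a $W^{1,n}$ map is absolutely continuous and hence has arbitrarily small mass on small balls.

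The hard part will be the analytic control of $p$-harmonic maps uniformly as $p\to n^+$: proving an $\epsilon$-regularity theorem — together with the ensuing interior $C^{1,\alpha}$ a priori estimates, the removable-singularity statement and the gap theorem — with constants that do not degenerate as $p\to n$, and controlling the energy in the ``neck'' regions of the blow-up so that the bubble $\omega$ carries a definite amount of $n$-energy and is therefore nonconstant. A secondary, more routine point is to justify the Banach-manifold structure and the Palais–Smale condition underlying the min–max for each fixed $p>n$; these hold precisely because $p>n$ and fail at $p=n$, which is exactly why the perturbation is introduced.
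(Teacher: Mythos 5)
Your overall strategy --- regularize the $n$-energy by $\mathcal{E}_p$ with $p>n$, run the min-max using the Palais--Smale condition in the supercritical regime, then blow up as $p\to n^+$ to produce either a strong limit or a bubble --- is exactly the skeleton of the paper's proof. The genuine gap lies in the analytic heart: you repeatedly invoke a uniform-in-$p$ $\eps$-regularity statement (``small local $n$-energy implies $u\in C^1$ with $\sup|\nabla u|\le C\,\mathcal{E}_n(u)^{1/n}$,'' and later ``$p$-uniform $\eps$-regularity gives $C^{1,\alpha}_{\mathrm{loc}}$ bounds''), but you never explain why the $C^{1,\alpha}$ constants for the $p$-harmonic \emph{map system} can be taken independent of $p$ as $p\to n^+$. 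For each fixed $p>n$ this is classical; at $p=n$ there are $n$-harmonic map results; but a uniform interpolation across $[n,n+\delta_0]$ is far from automatic and is precisely the obstruction the paper is designed to bypass. Instead of uniform $C^{1,\alpha}$ estimates, the paper proves a weaker but manifestly uniform $\eps$-regularity in the fractional scale $W^{1+t_0,p}$ (\Cref{th:epsreg}), obtained by combining an Iwaniec--Sbordone Hodge-decomposition/higher-integrability argument (\Cref{la:iwaniec}, giving $\nabla u\in L^{n+\sigma}$ uniformly) with the fractional differentiability estimate for the $p$-Laplacian from \cite{BDW20} (\Cref{th:anna}). This fractional gain, together with a compact Sobolev embedding into $W^{1,n+\sigma}$, supplies all the compactness needed in both the no-bubble and the one-bubble cases, with no pointwise gradient bound ever required. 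You flag the uniform estimates as ``the hard part,'' which is honest, but as written the proposal rests on an unproved (and possibly unavailable) uniform $C^{1,\alpha}$ theory.

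A secondary divergence is the lower bound $\liminf_{p\to n^+}\beta_p>0$. You route it through a gap theorem for small-energy $p$-harmonic maps applied to the min-max critical points $u_p$; this again requires uniform $\eps$-regularity (at least at the $C^0$ level) to confine the image in a convex ball, and so inherits the same gap. The paper instead proves the lower bound directly at the level of the sweepouts (\Cref{la:minimalbeta}) by a topological argument: if a competitor $\Phi$ had uniformly small $n$-energy on every slice, its slicewise harmonic extension to the ball $\B^{n+1}$ would remain in a tubular neighbourhood of $\n$, and composing with the nearest-point projection would produce a null-homotopy of $\Phi$, contradicting $[\Phi]=\alpha\neq 0$. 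This route uses no regularity theory for $p$-harmonic maps, has constants visibly independent of $p$, and settles the lower bound before any critical point is constructed. You also mention ``controlling the energy in the neck regions'' as a concern, but since only a single nontrivial bubble is needed (not full energy quantization), the paper avoids neck analysis entirely: the concentration radius $\rho_j$ is chosen so that the rescaled maps carry exactly $\eps^n/2$ of $n$-energy on a unit ball, which already forces the bubble $\omega$ to be nonconstant.
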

Throughout the paper we assume $n\ge3$. The two-dimensional case can be treated by the same arguments with minor modifications; since the corresponding results for harmonic maps are already well known (see \cite{Lamm06} and references therein), we do not include them here.

The existence result in this theorem is in line with general expectations in the theory of harmonic maps. In the case $k=0$, that is, when $\pi_{n}(\n) \neq \{0\}$, it follows from the classical Sacks--Uhlenbeck theory \cite{SacksU81}, since $\pi_{n}(\n)$ is generated by homotopy classes for which the $n$-energy admits a minimizer. The corresponding result for the $n$-harmonic maps was proved by \cite{Nakauchi-Takakuwa,Kawai-Nakauchi-Takeuchi}. See also \cite{Mazowiecka-Schikorra23} for related minimization results in a fractional framework.

When $\pi_{n}(\n)= \{0\}$ but $\pi_{n+k}(\n) \ni \alpha \neq 0$, one is naturally lead to seek critical points of the $n$-energy via a $\min$-$\max$ construction of the form
\[
 \inf_{\Phi \in \alpha}\ \max_{t\in [-1,1]^k}\ \mathcal{E}_n\big( \Phi(\cdot,t) \big).
\]
This is the equivalent approach to Birkhoff's min-max construction of nontrivial closed geodesics on manifolds diffeomorphic to the sphere $\S^2$. A direct implementation of this strategy at the critical exponent $n$ is, however, obstructed by the failure of the Palais--Smale condition in $W^{1,n}$.

In the recent work \cite{DML25}, this difficulty is overcome by developing a refined analytical framework, leading to a proof of the theorem \Cref{th:main} in dimension $n=3$, and in dimensions $n\geq 4$ in the case of manifolds which are spheres or Lie groups. The present work provides an alternative approach, which applies to all dimensions $n\ge3$ and to arbitrary smooth closed target manifolds $\n$.

The starting point is classical. For $p > n$, we consider the perturbed energy
\[
 \mathcal{E}_p(u) \coloneqq \int_{\S^n} |\nabla u|^p, \quad  u\colon \S^n \to \n.
\]
For $p>n$, the functional satisfies the Palais--Smale condition, allowing one to construct non-trivial critical points of the associated Euler--Lagrange equation
\begin{equation}\label{eq:pharmonic}
 -\div(|\nabla u|^{p-2} \nabla u) = |\nabla u|^{p-2} A(u)[\nabla u,\nabla u].
\end{equation}
These are obtained through the min-max scheme
\[
 \beta_p \coloneqq \min_{\Phi \in \alpha}\ \max_{ t\in [-1,1]^k}\ \mathcal{E}_p \big(\Phi(\cdot,t) \big).
\]
Provided that $\liminf_{p \to n} \beta_p > 0$, \Cref{th:main} follows  from a one-bubble compactness result: for any sequence $p \to n^+$, $p>n$ and any corresponding sequence of $p$-harmonic maps $u_p \in W^{1,p}(\S^n,\n)$, one of the following holds, up to a subsequence. Either $u_p$ converges strongly in $W^{1,n+\sigma}$  for some $\sigma > 0$, meaning that the limit is a nontrivial $n$-harmonic map; or there exists at least one nontrivial bubble, which, after rescaling, is a nontrivial $n$-harmonic map.

{\bf Outline.} Since the case $k=0$ is known (see for instance \cite{W1986,W1988} or \cite[Theorem 0]{Kawai-Nakauchi-Takeuchi}), we focus on the case $k \geq 1$. That is, throughout the remainder of this article we assume $\pi_{n+k}(\mfdN)\neq \{0\}$. In \Cref{s:prelim} we establish the validity of the min-max construction for $p$-harmonic maps and $p > n$. \Cref{s:epsreg} explains the main analytical tool of the paper. There we prove an $\eps$-regularity result for $p$-harmonic maps under a uniform $p$-energy bound. Instead of deriving higher \emph{second-order} regularity, we obtain a fractional gain in the differentiability, showing that solutions belong locally to $W^{1+\delta, p}$ for some $\delta>0$. This fractional improvement is sufficient to ensure the compactness needed in the bubbling analysis. The argument relies on the contemporary regularity theory for the $p$-Laplacian combined with the classical stability estimate in the spirit of Iwaniec--Sbordone \cite{Iwaniec-Sbordone}, which provides a uniform control of the $(n+\sigma)$-energy for some $\sigma>0$.

Finally, in \Cref{s:main} we combine the previous results to prove the main theorem. Using the compactness provided by the fractional regularity improvement and a one-bubble extraction argument, we show the existence of a nontrivial $n$-harmonic map.

{\bf Acknowledgment.} We thank Simon Nowak for pointing out \cite{BDW20} and helpful discussion.

The project is co-financed by: (DM) Swiss National Science Foundation, project SNF 200020\textunderscore219429; (KM) the Polish National Agency for Academic Exchange within Polish Returns Programme - BPN/PPO/2021/1/00019/U/00001;
(KM) the National Science Centre, Poland grant No. 2023/51/D/ST1/02907;
 (AS) NSF Career DMS-2044898;
(AS) Alexander von Humboldt Research Fellow (2023-2025).

 {\bf Notation.} Throughout the paper, $B(x,r)$ denotes the (open) ball of radius $r>0$ centred at $x$. When the ambient space is $\R^n$, this is the Euclidean ball; when the ambient space is $\S^n$, it is the geodesic ball. The meaning will always be clear from the context.

\section{Preliminaries: Existence of \texorpdfstring{$p$}{p}-harmonic maps}\label{s:prelim}
Let $\alpha \in \pi_{n+k}(\mfdN)$ be nontrivial. We identify $\S^{n+k} \simeq \S^n \times [-1,1]^k$, with the usual identification on $\partial[-1,1]^k$, and consider for $p > n$
\begin{equation}\label{eq:betapvvalue}
 \inf_{\Phi \in \alpha}\ \max_{t \in [-1,1]^k}\ \mathcal{E}_p(\Phi(\cdot,t))=\inf_{\Phi \in \alpha}\ \max_{t \in [-1,1]^k}\ \int_{\S^n} |\nabla \Phi(x,t)|^p  \dx\eqqcolon \beta_p.
\end{equation}
We need the following.
\begin{lemma}\label{la:minimalbeta}
Let $\beta_p$ be the value defined in \eqref{eq:betapvvalue}. We have
\[\liminf_{p \to n^+} \beta_p > 0. \]
\end{lemma}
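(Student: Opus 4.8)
The plan is to argue by contradiction. Assume $\liminf_{p\to n^+}\beta_p=0$, so that there are exponents $p_j\to n^+$ and sweepouts $\Phi_j\in\alpha$ with $\max_{t\in[-1,1]^k}\mathcal{E}_{p_j}(\Phi_j(\cdot,t))\to 0$. Since $\S^n$ has finite volume, H\"older's inequality gives $\mathcal{E}_n(u)\le|\S^n|^{1-n/p}\,\mathcal{E}_p(u)^{n/p}$ for every $u$ and every $p>n$; as $p_j$ stays in a bounded interval, the exponents $n/p_j$ are bounded away from $0$, whence
\[
 \delta_j:=\max_{t\in[-1,1]^k}\mathcal{E}_n\big(\Phi_j(\cdot,t)\big)\ \longrightarrow\ 0 .
\]
Moreover, since $p_j>n$, each $\Phi_j(\cdot,t)$ is continuous and $\Phi_j$ may be regarded as a continuous map $\S^n\times[-1,1]^k\to\mathcal{N}$, with $\Phi_j(\cdot,t)$ the constant map at a fixed basepoint $p_0\in\mathcal{N}$ for $t\in\partial[-1,1]^k$. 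It therefore suffices to show: for all sufficiently small $\eps_0>0$, any such sweepout with $\max_t\mathcal{E}_n(\Phi_j(\cdot,t))<\eps_0$ represents the trivial class in $\pi_{n+k}(\mathcal{N})$, which will contradict $\alpha\neq 0$.

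The key ingredient is the classical energy gap for the $n$--energy (Sacks--Uhlenbeck \cite{SacksU81}; White \cite{W1986,W1988}): there is $\eps_0=\eps_0(\mathcal{N},n)>0$ such that every map in $W^{1,n}(\S^n,\mathcal{N})$ of $n$--energy below $\eps_0$ lies in the path component of the constant maps, and that path component deformation retracts onto the constants. Concretely, one obtains $\eps_0>0$ together with a homotopy
\[
 R\colon \big\{u\in W^{1,n}(\S^n,\mathcal{N})\cap C^0(\S^n,\mathcal{N}):\mathcal{E}_n(u)<\eps_0\big\}\times[0,1]\ \longrightarrow\ C^0(\S^n,\mathcal{N}),
\]
continuous in $(u,s)$, with $R(u,0)=u$, $R(u,1)=\mathrm{const}_{p_0}$, and $R(\mathrm{const}_{p_0},s)=\mathrm{const}_{p_0}$ for all $s$. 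Granting this, for $j$ large enough that $\delta_j<\eps_0$ I would set
\[
 H_j(x,t,s):=R\big(\Phi_j(\cdot,t),s\big)(x),\qquad (x,t,s)\in\S^n\times[-1,1]^k\times[0,1].
\]
This is a homotopy from $\Phi_j$ (at $s=0$) to the sweepout $\Theta_j:=H_j(\cdot,\cdot,1)$; because $\Phi_j(\cdot,t)=\mathrm{const}_{p_0}$ on $\partial[-1,1]^k$ and $R$ fixes $\mathrm{const}_{p_0}$, the homotopy is stationary there, so $[\Phi_j]=[\Theta_j]$ in $\pi_{n+k}(\mathcal{N})$. Since every slice $\Theta_j(\cdot,t)=R(\Phi_j(\cdot,t),1)$ is a constant map, $\Theta_j$ is a sweepout through constant maps and hence represents $0\in\pi_{n+k}(\mathcal{N})$ (it factors through the projection $\S^n\times[-1,1]^k\to[-1,1]^k$ and carries no $\pi_{n+k}$--class, directly from the identification $\S^{n+k}\simeq\S^n\times[-1,1]^k/\!\sim$ used to define the sweepouts). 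Thus $\alpha=[\Phi_j]=0$, a contradiction, and therefore $\liminf_{p\to n^+}\beta_p>0$.

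The main obstacle I anticipate is not the existence of the threshold $\eps_0$ — which is standard — but the \emph{continuous dependence} of the nullhomotopy on the map, i.e.\ having the retraction $R$ (in a topology strong enough that $H_j$ is admissible) rather than merely a nullhomotopy for each fixed $u$; this is what allows the construction to be carried out uniformly over the parameters $t$ and over $j$, and it is exactly the point where White's Sobolev--homotopy theory \cite{W1986,W1988} enters. If a ready citation for $R$ in precisely the form above is inconvenient, I would replace it by a more hands-on construction: first approximate $\Phi_j$ within its homotopy class, preserving the boundary condition and keeping $\max_t\mathcal{E}_n(\Phi_j(\cdot,t))$ comparably small, by a piecewise smooth sweepout; then, using the $W^{1,n}$ Poincar\'e inequality on $\S^n$ together with $\delta_j\to 0$, observe that $c_j(t):=\Pi_{\mathcal{N}}\big(\tfrac{1}{|\S^n|}\int_{\S^n}\Phi_j(x,t)\dx\big)$ is a well-defined point of $\mathcal{N}$ depending continuously on $t$ and equal to $p_0$ on $\partial[-1,1]^k$; and finally contract each slice to $c_j(t)$ via the fixed-endpoint energy-gap nullhomotopy, after which the topological conclusion above is unchanged. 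A secondary, purely bookkeeping point is to ensure the convention identifying sweepouts with elements of $\pi_{n+k}(\mathcal{N})$ is applied consistently, so that ``all slices constant'' really does force the zero class; this is elementary once the set-up preceding \eqref{eq:betapvvalue} is fixed.
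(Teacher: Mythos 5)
Your overall strategy -- H\"older reduction to the $n$-energy, argue by contradiction, show that a small-$n$-energy sweepout is nullhomotopic -- is exactly the paper's strategy, and you correctly identify the one place where real work is needed: an \emph{explicit} nullhomotopy of small-energy maps that depends continuously on the map (so that it can be applied slice-by-slice over $t\in[-1,1]^k$ while preserving the boundary condition on $\partial[-1,1]^k$). However, as written the proposal does not actually close that gap. The ``classical energy gap'' gets you that each individual slice $\Phi_j(\cdot,t)$ is nullhomotopic, but it does not hand you the parametrized retraction $R$; and the citation to White \cite{W1986,W1988} is not of the form that produces such an $R$ off the shelf. Your ``hands-on'' alternative pushes the missing step around rather than removing it: after you produce the continuously varying basepoint $c_j(t)=\pi_{\n}\bigl(\barint_{\S^n}\Phi_j(\cdot,t)\bigr)$, you still write ``contract each slice to $c_j(t)$ via the fixed-endpoint energy-gap nullhomotopy'' -- which is precisely the continuous-in-$(u,t)$ contraction you set out to construct.

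The paper supplies that missing object concretely. For each $t$ it takes the Poisson (harmonic) extension $\Phi^h(\cdot,t)\colon B^{n+1}\to\R^N$ of $\Phi(\cdot,t)$, and then proves, by direct Poisson-kernel estimates together with Poincar\'e, the key uniform bound
\[
\left\|\dist\bigl(\Phi^h(\cdot,\cdot),\n\bigr)\right\|_{L^\infty(B^{n+1}\times[-1,1]^k)}\ \aleq\ \eps^{1/n},
\]
so that for $\eps$ small the nearest-point projection $\pi_{\n}$ is applicable and $\Psi\coloneqq\pi_{\n}\circ\Phi^h\colon B^{n+1}\times[-1,1]^k\to\n$ is well-defined, manifestly continuous (the Poisson integral is continuous in all variables), and restricts to $\Phi$ on $\S^n\times[-1,1]^k$. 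Since $B^{n+1}$ is contractible, this immediately forces $[\Phi]=0$ in $\pi_{n+k}(\n)$, giving the contradiction. Note this is exactly your center-of-mass idea upgraded to a full contraction: at $x=0$ the Poisson extension \emph{is} the mean of $\Phi(\cdot,t)$, and the radial family $x\mapsto rx$ interpolates continuously between that mean and the original slice. If you replace the unconstructed $R$ (or the unconstructed ``fixed-endpoint energy-gap nullhomotopy'') with this harmonic-extension-plus-projection argument, your proof becomes complete and matches the paper's.
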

\begin{proof}
By H\"older inequality it suffices to show
\[
 |\S^n|^{1-\frac{p}{n}} (\beta_p)^{\frac{n}{p}} \geq \inf_{\Phi \in \alpha} \max_{t \in [-1,1]^k} \int_{\S^n} |\nabla \Phi(x,t)|^n \dx > 0.
\]
Assume on the contrary that for a suitably small number $\eps > 0$ (chosen below) there exists a smooth $\Phi \colon \S^{n}\times[-1,1]^k \to \n$ for which we have
\begin{equation}\label{eq:smallness-contradiction}
 \max_{t \in [-1,1]^k} \int_{\S^n} |\nabla \Phi(x,t)|^n \dx < \eps.
\end{equation}
For each $t \in [-1,1]^k$, let $\Phi^h(\cdot,t)$ denote the harmonic extension of $\Phi(\cdot,t)\colon \S^n \to \n$  to a map $\Phi^h(\cdot,t)\colon \B^{n+1} \to \R^N$.
If $P(x,\xi) = \frac{1-|x|^2}{\omega_n |x-\xi|^{n+1}}$ is the Poisson kernel on the ball $B^{n+1}$, then we have 
\[
 \Phi^h(x,t) = \int_{\S^n} P(x,\xi) \Phi(\xi,t)\dif \xi.
\]
Since we have $\int_{\S^n}P(x,\xi)\dif \xi = 1$, the following holds for any $\zeta \in \S^n$
\[
\begin{split}
 \dist(\Phi^h(x,t),\n) &\le \abs{\Phi^h(x,t) - \Phi(\zeta,t)} \le \int_{\S^n} P(x,\xi) \abs{\Phi(\xi,t) - \Phi(\zeta,t)}\dif \xi.
\end{split}
 \]
Multiplying both sides by $P(x,\zeta)$ and integrating over $\S^n$ with respect to $\zeta$ we obtain
\begin{equation}\label{eq:distanceestimate}
 \begin{split}
  \dist(\Phi^h(x,t),\n) &\le \int_{\S^n}\int_{\S^n} P(x,\xi) P(x,\zeta) \abs{\Phi(\xi,t) - \Phi(\zeta,t)}\dif \xi \dif \zeta. 
 \end{split}
\end{equation}
If $|x|\le \frac12$, then we have $|x-\xi|\ge \frac12$ for any $\xi\in \S^n$ and thus, it holds
\[
 \int_{\S^n}|P(x,\xi)|^{n'}\dif \xi \aleq \int_{\S^n}\brac{\frac{1}{|x-\xi|^{n+1}}}^{n'} \dif \xi \aleq 1.
\]
Hence, for $|x|\le \frac12$, we have, using H\"{o}lder and Poincar\'{e} inequality
\[
\begin{split}
 \dist(\Phi^h(x,t),\n) 
 &\le \|P(x,\cdot)\|_{L^{n'}(\S^n)}^2 \brac{\int_{\S^n}\int_{\S^n} \abs{\Phi(\xi,t) - \Phi(\zeta,t)}^n\dif \xi \dif \zeta}^{\frac 1n}\\
 &\aleq \brac{\int_{\S^n}|\nabla \Phi(x,t)|^n \dif \xi}^{\frac 1n} \aleq \eps^{\frac 1n}.
\end{split}
 \]
If $|x|\ge \frac12$, then we let $r=1-|x|$ and $X=\frac{x}{|x|}$ and we write 
\[
A(X,r,0) \coloneqq B(X,r), \quad  A(X,r,k)\coloneqq B(X,2^{k}r)\setminus B(X,2^{k-1}r) \text{ for }k\ge 1.
\]
Let us denote by $\theta = \dist_{\S^n}(\frac{x}{|x|},\xi)$. Then, we have
\[
 |x-\xi|^2 = |x|^2+1-2|x|\frac{x}{|x|}\cdot \xi = |x|^2+1-2|x|\cos\theta = (1-|x|)^2+2|x|(1-\cos\theta).
 \]
For every $\theta\in[0,1]$ we have the elementary inequality $\frac{2}{\pi^2}\theta^2\leq 1-\cos \theta$, thus
\[
 |x-\xi|^2\ageq (1-|x|)^2+|x|\theta^2 \ageq (1-|x|)^2 + \theta^2.
\]
Hence, for $\xi \in A(X,r,k)$ we have $\theta \approx 2^k r$ and $|x-\xi|\ageq 2^k r$. Thus, from \eqref{eq:distanceestimate} we get for $|x|\ge \frac 12$
\[
\begin{split}
 \dist(\Phi^h(x,t),\n) 
 &\aleq \sum_{k\in \N\cup\{0\}}\sum_{\ell\in\N\cup\{0\}}\int_{A(X,k,r)}\int_{A(X,\ell,r)} \abs{\Phi(\xi,t) - \Phi(\zeta,t)}\frac{r}{(2^k r)^{n+1}}\frac{r}{(2^\ell r)^{n+1}}\dif \xi \dif \zeta\\
 &\aleq \sum_{k\in \N\cup\{0\}}\sum_{\ell\in\N\cup\{0\}}2^{-k} 2^{-\ell}\barint_{A(X,k,r)}\barint_{A(X,\ell,r)} \abs{\Phi(\xi,t) - \Phi(\zeta,t)}\dif \xi \dif \zeta\\
 &\aleq \sum_{k\in \N\cup\{0\}}\sum_{\ell\in\N\cup\{0\}}2^{-k} 2^{-\ell} \brac{\barint_{A(X,k,r)}\barint_{A(X,\ell,r)} \abs{\Phi(\xi,t) - \Phi(\zeta,t)}^n\dif \xi \dif \zeta}^\frac{1}{n}\\
 &\aleq \sum_{k\in \N\cup\{0\}}\sum_{\ell\in\N\cup\{0\}}2^{-k} 2^{-\ell} \|\nabla \Phi(\cdot,t)\|_{L^n(\S^n)} \aleq \eps^{\frac 1n}.
 \end{split}
\]
Thus, $\left\| \dist(\Phi^h,\n) \right\|_{L^{\infty}(B^{n+1}\times[-1,1]^k)} \aleq \eps^{1/n}$.

Let $\pi_{\n}\colon B_{\delta}(\n) \to \n$ denote the nearest point projection. We then define
\[
 \Psi\coloneqq \pi_{\n} \circ \Phi^h \colon \B^{n+1} \times [-1,1]^k \to \n.
\]
By construction, $\Psi$ is continuous and satisfies
$
\Psi  \rvert_{\S^n\times [-1,1]^k} = \Phi.
$
Hence, we have $\left[\Psi \rvert_{\S^n\times [-1,1]^k} \right] = [\Phi]$ in $\pi_{k+n}(\n)$. On the other hand, the map $\Psi\rvert_{\S^n\times [-1,1]^k}$ is contractible, therefore $\left[\Psi\rvert_{\S^n\times [-1,1]^k} \right]=[\Phi]=0$. This is a contradiction with the assumption that $\alpha=[\Phi]$ is nontrivial.
\end{proof}

Since $p > n$ the functional $\mathcal{E}_p$ satisfies the Palais--Smale condition. In view of \Cref{la:minimalbeta}, we may apply the mountain-pass lemma (see, e.g., \cite[Theorem 6.1]{StruweVariationalMethods}) which yields the existence of a nontrivial critical point  realizing the min-max value $\beta_p$.
Since $p>n$, the resulting $p$-harmonic map $u_p$ is \emph{H\"{o}lder continuous} by the Morrey--Sobolev embedding. Moreover, following the arguments of \cite[Section 3]{HLp} (see also \cite[Theorem 1.3]{M2025}) one concludes that $u \in C^{1,\alpha}$ for some $\alpha > 0$.
We have thus established the following result.
\begin{theorem}\label{th:minmaxpharmmaps}
If $\pi_{n+k}(\n) \neq \{0\}$ for some $k \geq 1$, then for any $p > n$ there exists a $p$-harmonic map $u_p\in W^{1,p}\cap C^{1,\alpha}(\S^n,\n)$, i.e., a solution to \eqref{eq:pharmonic}, and we have
\[
 \int_{\S^n} |\nabla u_p|^p =\beta_p=\min_{\Phi \in \alpha}\ \max_{t \in [-1,1]^k}\ \mathcal{E}_p\big(\Phi(\cdot,t)\big).
\]
\end{theorem}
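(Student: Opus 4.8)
The plan is to obtain $u_p$ by the classical Birkhoff--Sacks--Uhlenbeck min-max scheme on the Banach manifold $W^{1,p}(\S^n,\mathcal{N})$, exploiting the supercriticality $p>n$ at every step. First I would set up the functional-analytic framework: since $p>n$, the Morrey--Sobolev embedding $W^{1,p}(\S^n)\hookrightarrow C^{0,1-n/p}(\S^n)$ holds and the embedding into $C^0(\S^n)$ is compact; hence $W^{1,p}(\S^n,\mathcal{N})\coloneqq\{u\in W^{1,p}(\S^n,\R^N):u(x)\in\mathcal{N}\text{ a.e.}\}$ is a smooth Banach submanifold of $W^{1,p}(\S^n,\R^N)$, with tangent space $T_uW^{1,p}(\S^n,\mathcal{N})=\{v\in W^{1,p}(\S^n,\R^N):v(x)\in T_{u(x)}\mathcal{N}\}$, and $\mathcal{E}_p$ is a $C^1$ functional on it whose critical points are precisely the weak solutions of \eqref{eq:pharmonic}.

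Next I would make the min-max class precise. Via the adjunction $\pi_{n+k}(\mathcal{N})\cong\pi_k(\mathrm{Map}_*(\S^n,\mathcal{N}))$ and the identification $\S^{n+k}\simeq\S^n\times[-1,1]^k$, I represent $\alpha$ by sweepouts, i.e.\ continuous maps $\Phi\colon[-1,1]^k\to W^{1,p}(\S^n,\mathcal{N})$ that are constant on $\partial[-1,1]^k$ with $\pi_k$-class $\alpha$. Density of smooth maps in $W^{1,p}(\S^n,\mathcal{N})$ (again using $p>n$) makes this admissible family $\mathcal{F}_\alpha$ nonempty and stable under the deformations produced by a pseudo-gradient flow for $\mathcal{E}_p$, and $\inf_{\Phi\in\mathcal{F}_\alpha}\max_t\mathcal{E}_p(\Phi(\cdot,t))$ is exactly $\beta_p$ from \eqref{eq:betapvvalue}. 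The H\"older-inequality step in the proof of \Cref{la:minimalbeta}, applied at this fixed $p$, gives $\beta_p>0$, which strictly exceeds the energy $0$ of the constant maps, so $\beta_p$ is a genuinely nontrivial critical level.

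Then I would verify the Palais--Smale condition for $\mathcal{E}_p$ on $W^{1,p}(\S^n,\mathcal{N})$: given $(u_j)$ with $\mathcal{E}_p(u_j)$ bounded and $\|d\mathcal{E}_p(u_j)\|\to0$, the energy bound gives boundedness in $W^{1,p}$, hence (along a subsequence) weak $W^{1,p}$-convergence and, by the compact embedding into $C^0$, uniform convergence to some $u\in W^{1,p}(\S^n,\mathcal{N})$; testing the smallness of $d\mathcal{E}_p(u_j)$ against the $W^{1,p}$-projection of $u_j-u$ onto $T_{u_j}W^{1,p}(\S^n,\mathcal{N})$ and invoking the monotonicity inequality $\langle|\xi|^{p-2}\xi-|\eta|^{p-2}\eta,\xi-\eta\rangle\geq c\,|\xi-\eta|^p$ (valid for $p\geq2$) upgrades this to strong $W^{1,p}$-convergence. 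With Palais--Smale in hand and $\beta_p>0$, the min-max theorem on Banach/Finsler manifolds (\cite[Theorem 6.1]{StruweVariationalMethods}) produces a critical point $u_p$ with $\mathcal{E}_p(u_p)=\beta_p$, nontrivial since $\beta_p>0$, solving \eqref{eq:pharmonic} weakly. Regularity then follows essentially for free: $p>n$ and Morrey's embedding give $u_p\in C^{0,1-n/p}(\S^n,\mathcal{N})$, and a continuous --- hence bounded --- weak solution of the $p$-harmonic map system is $C^{1,\alpha}$ by the regularity theory for the $p$-Laplacian with controlled right-hand side, following \cite[Section 3]{HLp} (see also \cite[Theorem 1.3]{M2025}).

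I expect the main obstacle to be the Palais--Smale step on the \emph{nonlinear} space $W^{1,p}(\S^n,\mathcal{N})$, together with running the deformation/min-max machinery on a Banach (Finsler) rather than a Hilbert manifold. The constraint $u(x)\in\mathcal{N}$ forces one to project test vector fields onto the moving tangent spaces $T_{u_j(x)}\mathcal{N}$ --- legitimate precisely because uniform convergence confines $u_j$ to a fixed tubular neighborhood of $\mathcal{N}$ --- and the passage from weak to strong convergence rests on monotonicity of the $p$-Laplace operator rather than on any linear structure. Everything else is classical once $p>n$ is used to secure $W^{1,p}\hookrightarrow C^0$ and the $C^1$-regularity of $\mathcal{E}_p$.
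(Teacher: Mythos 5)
Your proposal follows the same route as the paper: invoke the Palais--Smale condition for $\mathcal{E}_p$ when $p>n$, combine the positivity of $\beta_p$ from \Cref{la:minimalbeta} with Struwe's min-max theorem \cite[Theorem 6.1]{StruweVariationalMethods} to produce a nontrivial critical point, and then obtain H\"older continuity via Morrey--Sobolev and $C^{1,\alpha}$ via \cite[Section 3]{HLp} and \cite[Theorem 1.3]{M2025}. The paper treats the Palais--Smale verification and the Finsler-manifold setup as standard background and does not spell them out, whereas you fill in those details; the substance is the same.
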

As a corollary, using a standard covering argument, see, e.g., \cite[Proposition 4.3 and Theorem 4.4]{SacksU81},  we have the following disjunction of cases for the limiting behaviour of $u_p$ as $p\to n^+$.
\begin{corollary}\label{co:minmaxpharmmaps}

Fix $\eps > 0$. If $\pi_{n+k}(\n) \neq \{0\}$ for some $k \geq 1$, then there exists a sequence  $p_j \xrightarrow{j \to\infty} n^+$ and a sequence of $p_j$-harmonic map $u\in W^{1,p_j}\cap C^{1,\alpha_j}(\S^n,\n)$, i.e., solutions to \eqref{eq:pharmonic} with
\[
 \liminf_{j \to \infty} \int_{\S^n} |\nabla u_{p_j}|^{p_j} > 0,
\]
such that one of the following holds.
\begin{itemize} 
 \item (No bubble) For all $x \in \S^{n}$ there exists $\rho_x > 0$ such that 
 \[
  \limsup_{j \to \infty} \int_{B(x,\rho_x)} |\nabla u_{p_j}|^{n} < \eps^n.
 \]
\item (At least one isolated bubble) There exists at least one $\bar{x} \in \S^n$ and a radius $R > 0$ with the following property
 \[
  \forall \rho > 0,\qquad \liminf_{j \to \infty} \int_{B(\bar{x},\rho)} |\nabla u_{p_j}|^{n} \geq \eps^n, 
 \]
and for any $x \in \S^n$, $0<\dist_{\S^n}(x,\bar{x})<R$ there exists some $\rho_x > 0$ such that 
\[
  \limsup_{j \to \infty} \int_{B(x,\rho_x)} |\nabla u_{p_j}|^{n} \leq \eps^n.
\]
\end{itemize}
\end{corollary}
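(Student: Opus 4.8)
The plan is to run the classical Sacks--Uhlenbeck covering argument (as in \cite[Proposition 4.3 and Theorem 4.4]{SacksU81}) for the sequence of $p$-harmonic maps produced by \Cref{th:minmaxpharmmaps}, organised around their defect measures.

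\emph{Step 1 (uniform energy bounds and choice of the sequence).} First I would fix once and for all a smooth representative $\Phi_0\in\alpha$. Testing \eqref{eq:betapvvalue} against $\Phi_0$ gives $\beta_p\le\max_{t\in[-1,1]^k}\int_{\S^n}|\nabla\Phi_0(\cdot,t)|^p$, which is bounded uniformly for $p\in(n,n+1]$ since $\nabla\Phi_0$ is bounded; combined with \Cref{la:minimalbeta} this lets me pick $p_j\to n^+$ with $\beta_{p_j}\to\liminf_{p\to n^+}\beta_p=:\beta_\ast\in(0,\infty)$. Let $u_j:=u_{p_j}$ be the corresponding $p_j$-harmonic maps from \Cref{th:minmaxpharmmaps}, so $\int_{\S^n}|\nabla u_j|^{p_j}=\beta_{p_j}$, hence $\liminf_j\int_{\S^n}|\nabla u_j|^{p_j}=\beta_\ast>0$, while by Hölder $\int_{\S^n}|\nabla u_j|^{n}\le|\S^n|^{1-n/p_j}\beta_{p_j}^{n/p_j}\le\Lambda<\infty$ uniformly in $j$. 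Every further subsequence extracted below still satisfies $\liminf_j\int_{\S^n}|\nabla u_j|^{p_j}\ge\beta_\ast>0$, so the normalisation demanded in the statement is automatic.

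\emph{Step 2 (defect measure and concentration set).} Next I consider the finite measures $\mu_j$ on $\S^n$ given by $\mu_j(E)=\int_E|\nabla u_j|^n$, with $\mu_j(\S^n)\le\Lambda$. By weak-$*$ compactness, after passing to a subsequence $\mu_j\rightharpoonup\mu$ for some finite measure $\mu$. I then set
\[
 \Sigma:=\Big\{x\in\S^n:\ \limsup_{j\to\infty}\mu_j\big(B(x,\rho)\big)\ge\eps^n\ \text{ for every }\rho>0\Big\}.
\]
If $x_1,\dots,x_m\in\Sigma$ are distinct, choosing $\rho$ so small that the $B(x_i,\rho)$ are pairwise disjoint gives $m\eps^n\le\sum_i\limsup_j\mu_j(B(x_i,\rho))\le\Lambda$, so $\Sigma$ is finite with $\#\Sigma\le\Lambda\eps^{-n}$.

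\emph{Step 3 (the dichotomy).} If $\Sigma=\emptyset$, then negating its defining condition shows that every $x\in\S^n$ admits $\rho_x>0$ with $\limsup_j\mu_j(B(x,\rho_x))<\eps^n$, which is the ``no bubble'' alternative. If $\Sigma\neq\emptyset$, I fix $\bar x\in\Sigma$ and set $R:=\dist_{\S^n}(\bar x,\Sigma\setminus\{\bar x\})>0$, with the convention $R:=1$ when $\Sigma=\{\bar x\}$. From $\bar x\in\Sigma$ and upper semicontinuity of mass on compact sets, $\mu(\overline{B(\bar x,\rho)})\ge\limsup_j\mu_j(B(\bar x,\rho))\ge\eps^n$ for every $\rho>0$; letting $\rho\downarrow0$ gives $\mu(\{\bar x\})\ge\eps^n$, and lower semicontinuity of mass on open sets then yields $\liminf_j\mu_j(B(\bar x,\rho))\ge\mu(B(\bar x,\rho))\ge\mu(\{\bar x\})\ge\eps^n$ for all $\rho>0$. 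Finally, if $0<\dist_{\S^n}(x,\bar x)<R$ then $x\notin\Sigma$ (it differs from $\bar x$ and is strictly closer to $\bar x$ than any other point of $\Sigma$), so again negating the defining condition of $\Sigma$ produces $\rho_x>0$ with $\limsup_j\mu_j(B(x,\rho_x))<\eps^n$. This is the ``one isolated bubble'' alternative, and the two cases are exhaustive.

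\emph{Expected difficulty.} There is no substantial obstacle here: the statement is essentially the standard covering lemma. The only points deserving attention are the uniform upper $n$-energy bound in Step~1 --- without the fixed smooth competitor $\Phi_0$ the concentration set $\Sigma$ need not be finite --- and reading the isolation radius $R$ off the finiteness of $\Sigma$ so that it is independent of all further subsequences; the passage from $\limsup$ to the $\liminf$ lower bound at $\bar x$ is handled cleanly through the limit measure $\mu$ rather than by a diagonal extraction.
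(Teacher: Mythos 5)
Your overall route --- weak-$*$ compactness of the defect measures $\mu_j := |\nabla u_{p_j}|^n\,\mathcal{L}^n$, a finite concentration set $\Sigma$, and the dichotomy read off $\Sigma$ --- is precisely the ``standard covering argument'' \`{a} la Sacks--Uhlenbeck that the paper points to, and Steps~1 and~3 are correct as written.

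The one place that needs repair is the finiteness of $\Sigma$ in Step~2. You claim $m\eps^n \le \sum_{i}\limsup_j \mu_j(B(x_i,\rho)) \le \Lambda$ from disjointness of the balls, but disjointness only gives $\sum_i \mu_j(B(x_i,\rho)) \le \Lambda$ for each fixed $j$, hence $\limsup_j \sum_i \mu_j(B(x_i,\rho)) \le \Lambda$; since in general $\sum_i \limsup_j \ge \limsup_j \sum_i$, this bounds the wrong quantity. The repair is to run your Step~3 observation first: for every $x\in\Sigma$ and every $\rho>0$, Portmanteau gives $\mu\bigl(\overline{B(x,\rho)}\bigr) \ge \limsup_j \mu_j\bigl(B(x,\rho)\bigr) \ge \eps^n$, so letting $\rho\downarrow 0$ yields $\mu(\{x\}) \ge \eps^n$; combined with $\mu(\S^n) = \lim_j \mu_j(\S^n) \le \Lambda$ (testing against the constant $1$), this gives $\#\Sigma \le \Lambda\eps^{-n}$. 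With that reordering the proof is complete and is exactly the expansion the paper had in mind.
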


\section{An \texorpdfstring{$\eps$}{eps}-regularity Theorem}\label{s:epsreg}
The main technical ingredient of the paper is an $\eps$-regularity theorem: it says that there is a threshold $\eps > 0$ such that for any (geodesic) ball $B(x,\rho)$ where the $n$-energy drops below $\eps$ we have strong convergence (up to subsequence).

\begin{theorem}\label{th:epsreg}
There exists an $\eps > 0$ such that for each $p_0>n$ there exist $p_1 > n$ and $t_0 > 0$ with the property that for all $p \in [n,p_1]$ and any $\Lambda > 0$ the following holds.

Assume $u \in W^{1,p_0}(B(x_0,r),\n)$ is a $p$-harmonic map which satisfies
\[
 \|\nabla u\|_{L^n(B(x_0,r))} < \eps \quad \text{and} \quad
\|\nabla u\|_{L^p(B(x_0,r))} \leq \Lambda.
\]
Then it holds
\[
 [\nabla u]_{W^{t_0,p}(B(x_0,r/2))} \leq C(n,\Lambda,r).
\]
\end{theorem}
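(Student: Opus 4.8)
I would prove the estimate in two steps: first a \emph{uniform higher-integrability} estimate pushing the critical hypothesis $\|\nabla u\|_{L^n}<\eps$ slightly above the natural exponent, then a \emph{fractional regularity} estimate for the $p$-Laplace system that turns the integrability gain into the claimed fractional Sobolev bound. Throughout I would work in a fixed chart on $\S^n$ in which the metric is smooth and uniformly comparable to the Euclidean one on $B(x_0,r)$, so that \eqref{eq:pharmonic} becomes a $p$-Laplace type system with smooth, bounded, elliptic coefficients; this affects the estimates only through harmless lower-order terms, which I would not track.

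\textbf{Step 1: a uniform $(n+\sigma)$-energy bound.} First I would write \eqref{eq:pharmonic} as $-\div(|\nabla u|^{p-2}\nabla u)=f$ with $|f|\le\Lambda_0|\nabla u|^p$, $\Lambda_0=\Lambda_0(\n)$, and observe that, $f$ being normal to $T_u\n$, the right-hand side admits the Rivière form $f=|\nabla u|^{p-2}\,\Omega\cdot\nabla u$ with $\Omega$ antisymmetric and $|\Omega|\aleq|\nabla u|\in L^n$. The smallness hypothesis $\|\nabla u\|_{L^n(B(x_0,r))}<\eps$ should then allow, on interior balls, a gauge/Hodge-type decomposition (a Coulomb frame, say) absorbing the critical part of the right-hand side into a divergence-form datum of small critical norm — bringing the equation into a form to which the $L^q$-stability estimate \emph{in the spirit of Iwaniec--Sbordone} \cite{Iwaniec-Sbordone} applies, the decisive feature being that the relevant Hodge-projection and reverse-H\"older constants depend \emph{continuously} on the integrability exponent near $q=n$. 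The conclusion I would extract is a self-improving bound: there is $\sigma=\sigma(n)>0$ such that, for $p_1>n$ chosen close enough to $n$ (depending on $p_0$), one has for all $p\in[n,p_1]$
\[
\|\nabla u\|_{L^{n+\sigma}(B(x_0,3r/4))}\le C(n,\Lambda,r),
\]
and here I would fix $\eps$ once and for all as the threshold below which the gauge construction and the absorption close.

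\textbf{Step 2: fractional gain from $p$-Laplace regularity.} Next I would fix $p_1\le\min\{p_0,\,n+\sigma/2\}$, so that $|f|\le\Lambda_0|\nabla u|^p\in L^q_{\loc}$ with $q=\tfrac{n+\sigma}{p}\ge\tfrac{n+\sigma}{n+\sigma/2}>1$ and, by Step 1, $\|f\|_{L^q(B(x_0,3r/4))}\le C(n,\Lambda,r)$. I would then invoke the contemporary regularity theory for the (possibly degenerate) $p$-Laplace system with $L^q$-data, $q>1$, in the form of \cite{BDW20}: a solution of $-\div(|\nabla u|^{p-2}\nabla u)=f$ satisfies an interior fractional Sobolev estimate, most naturally stated for the nonlinear stress $V(\nabla u)\coloneqq|\nabla u|^{\frac{p-2}{2}}\nabla u$ in the form $[V(\nabla u)]_{W^{s,2}(B(x_0,5r/8))}\le C(n,p,r,\Lambda,\|f\|_{L^q})$ for some $s=s(n,p,q)>0$. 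Since $\xi\mapsto V(\xi)$ is a homeomorphism of $\R^{N\times n}$ with $|V(\xi)-V(\zeta)|\aeq(|\xi|+|\zeta|)^{\frac{p-2}{2}}|\xi-\zeta|$, I would pass from this to a genuine bound on $\nabla u$ by a pointwise comparison of finite differences, using the $L^{n+\sigma}$-bound from Step 1 (through H\"older) to dispose of the extra factor $(|\nabla u|+\dots)^{\frac{p-2}{2}}$; this yields $[\nabla u]_{W^{t_0,p}(B(x_0,r/2))}\le C(n,\Lambda,r)$ for some $t_0=t_0(n,p_0)>0$, and since all exponents and constants depend continuously on $p$, a last shrinking of $p_1$ makes $t_0$ and the constant uniform over $p\in[n,p_1]$.

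\textbf{Expected main obstacle.} The hard part is Step 1. The term $|\nabla u|^p$ is exactly critical for the $L^n$-energy, so no bare Caccioppoli/Gehring scheme can close the estimate; one must marry the harmonic-map structure to the smallness of the $n$-energy, and — this being essential for the one-bubble compactness of \Cref{s:main} — make sure the gained exponent $\sigma$ and the constant do \emph{not} degenerate as $p\to n^+$. Securing that uniformity is exactly what the Iwaniec--Sbordone-type stability is for, and it is the crux of the theorem; by comparison, Step 2 is a careful but essentially routine application of known $p$-Laplacian regularity, the only real subtlety being the passage from $V(\nabla u)$ back to $\nabla u$.
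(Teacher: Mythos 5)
Your two-step architecture — a uniform $(n+\sigma)$-energy bound followed by the \cite{BDW20} fractional estimate — is exactly the paper's, and the constraints you identify (uniformity in $p\to n^+$, choosing $p_1$ small so that the datum lands in $L^q$ with $q>1$) are the correct ones. But the route you propose for Step~1 diverges from the paper's, and your Step~2 comparison has a sign-of-exponent problem.

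For Step~1 you suggest rewriting the right-hand side in Rivi\`ere form $|\nabla u|^{p-2}\Omega\cdot\nabla u$ with $\Omega$ antisymmetric and constructing a Coulomb frame to absorb the critical part. The paper does none of this: it only uses the size bound $|A(u)[\nabla u,\nabla u]|\aleq|\nabla u|^2$. After localizing with a cutoff $\tilde u=\eta(u-(u)_B)$, the paper applies Iwaniec's Hodge decomposition directly to $|\nabla\tilde u|^\sigma\nabla\tilde u=\nabla\varphi+B$ (with $\|B\|\aleq\sigma\|\nabla\tilde u\|^{1+\sigma}_{L^{p+\sigma}}$), tests the cutoff equation against $\varphi$, and absorbs the critical term using the smallness $\|\nabla u\|_{L^n}<\eps$; the cutoff errors $g$ and $F$ are strictly subcritical and handled by Young's inequality. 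This is considerably lighter than a Coulomb-gauge argument, and — importantly for uniformity — the Hodge-decomposition constants are explicitly controlled uniformly for $p\in[n,n+1]$, which is precisely the Iwaniec--Sbordone stability you invoke, but deployed in a far more elementary way. Your gauge route is not wrong in spirit (it is close to what \cite{DML25} does), but as sketched it leaves the crux — uniform-in-$p$ constants for the nonlinear Coulomb construction on the $n$-Laplacian — entirely open, whereas the paper sidesteps the issue.

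In Step~2, the proposed conversion from $[V(\nabla u)]_{W^{s,2}}$ to $[\nabla u]_{W^{t,p}}$ by "disposing of the extra factor $(|\nabla u|+\cdots)^{(p-2)/2}$ through H\"older" cannot work as stated: inverting $|V(\xi)-V(\zeta)|\aeq(|\xi|+|\zeta|)^{(p-2)/2}|\xi-\zeta|$ puts that factor in the \emph{denominator}, so an $L^{n+\sigma}$ bound on $\nabla u$ does not help. The correct tool is the one-sided monotonicity inequality, which requires no H\"older at all; the paper uses its $p'$-version, $\bigl||a|^{p-2}a-|b|^{p-2}b\bigr|\ageq_p|a-b|^{p-1}$, applied to $[\,|\nabla u|^{p-2}\nabla u\,]_{W^{s,p'}}$ (the form in which \cite[Theorem~4.1]{BDW20} naturally delivers the estimate) to conclude $[\nabla u]_{W^{t_0,p}}\aleq 1$ with $t_0$ uniform on $[n,p_1]$. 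Replace the H\"older step by this monotonicity inequality and Step~2 closes.
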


We begin with the following standard consequence of Iwaniec--Sbordone stability result \cite[Theorem 4]{Iwaniec-Sbordone} for the $p$-Laplace equation.
\begin{lemma}[Iwaniec--Sbordone a priori estimate]\label{la:iwaniec}
Let $n\geq 3$, fix a compact manifold $\n\subset \R^N$ and let $A$ be its second fundamental form. There exists $\eps > 0$, such that for any $p_0 > n$ there exists $p_1\in(n,p_0)$ and $\sigma > 0$ so that the following holds whenever $p \in [n,p_1]$.

Assume that $u \in W^{1,p_0}(B(x_0,10r),\n)$ satisfies
\begin{equation}\label{eq:pharm}
 \div(|\nabla u|^{p-2} \nabla u) = |\nabla u|^{p-2} A(u)[\nabla u,\nabla u] \quad \text{in $\B(x_0,10r)$},
\end{equation} 
and 
\begin{equation}\label{eq:smallnessassumption}
 \|\nabla u\|_{L^n(B(x_0,10r))} < \eps.
\end{equation}
Then it holds
\[
 \|\nabla u\|_{L^{n+\sigma}(B(x_0,r))} \leq C(\n,n)\brac{1+\|\nabla u\|_{L^{p}(B(x_0,10r))}^{p-1}}^{\frac{n+1}{n-1}} .
\]
\end{lemma}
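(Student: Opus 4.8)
The plan is to read \eqref{eq:pharm} as a perturbation of the pure $p$-Laplace system and to run a higher-integrability (Gehring-type) argument whose integrability gain $\sigma$ and whose constants stay \emph{uniform in $p$ as $p\to n^+$}; this uniformity is precisely what the Iwaniec--Sbordone stability theorem provides. I would first record two preliminaries. The statement is non-vacuous and every test-function computation below is licit: by hypothesis $u\in W^{1,p_0}(B(x_0,10r),\n)$ with $p_0>p_1>n$, so in particular $\nabla u\in L^{p_0}_{\loc}$ and $|\nabla u|^{p-2}\nabla u\in L^{p_0/(p-1)}_{\loc}$, and the task of the lemma is only to turn the existing qualitative regularity into a bound that is \emph{quantitative} and \emph{stable in $p$}. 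Secondly, because $\n\subset\R^N$ is compact, both $|u|$ and $A$ are bounded, so the right-hand side of \eqref{eq:pharm} obeys the pointwise estimate $\bigl||\nabla u|^{p-2}A(u)[\nabla u,\nabla u]\bigr|\le C(\n)\,|\nabla u|^p$; in particular it lies in $L^1(B(x_0,10r))$ with norm at most $C(\n)\,\|\nabla u\|_{L^p(B(x_0,10r))}^p$.

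The first step would be to derive a reverse Hölder inequality with constants independent of $p\in[n,p_1]$. For concentric balls $B_\rho\subset B_{2\rho}\subset B(x_0,10r)$ I would test \eqref{eq:pharm} against $\eta^p\,(u-u_{B_{2\rho}})$, with $\eta$ a standard cutoff; the Caccioppoli inequality then gives
\begin{equation*}
 \int_{B_\rho}|\nabla u|^p\;\le\;C\rho^{-p}\int_{B_{2\rho}}|u-u_{B_{2\rho}}|^p\;+\;C(\n)\int_{B_{2\rho}}|\nabla u|^p\,|u-u_{B_{2\rho}}|.
\end{equation*}
The first term on the right is handled by Sobolev--Poincar\'e and produces a subcritical exponent $q=q(n)<p$, which is the source of the self-improvement. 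The second, critical-growth term I would estimate by H\"older with exponents $n$ and $n'$ together with $|u-u_{B_{2\rho}}|\le C(\n)$, obtaining a bound by $C(\n)\,\|\nabla u\|_{L^n(B_{2\rho})}\,\bigl\||\nabla u|^{p-1}\bigr\|_{L^{n'}(B_{2\rho})}$. Here the factor $\|\nabla u\|_{L^n(B_{2\rho})}\le\|\nabla u\|_{L^n(B(x_0,10r))}<\eps$ is small — this is where the hypothesis \eqref{eq:smallnessassumption} enters — while the remaining factor is a higher-integrability quantity of $\nabla u$ that is finite a priori, provided $p_1$ is chosen close enough to $n$ (after $p_0$) that $(p-1)\,n'\le p_0$. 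Taking $\eps$ small enough to absorb this term into the left-hand side then yields a perturbed reverse Hölder inequality for $|\nabla u|$ on dyadic subballs of $B(x_0,10r)$, with constants that do not degenerate as $p\to n^+$.

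The second step would be to feed this inequality into the Iwaniec--Sbordone stability estimate \cite[Theorem 4]{Iwaniec-Sbordone}, whose whole purpose is to keep the integrability gain uniform under perturbation of the growth exponent and to include the endpoint $p=n$. This produces $\sigma=\sigma(n,p_0)>0$, independent of $p\in[n,p_1]$, together with the bound
\begin{equation*}
 \|\nabla u\|_{L^{n+\sigma}(B(x_0,r))}\;\le\;C(\n,n)\,\Bigl(1+\|\nabla u\|_{L^{p}(B(x_0,10r))}^{\,p-1}\Bigr)^{\frac{n+1}{n-1}}.
\end{equation*}
The power $p-1$ is the natural ``flux'' exponent $\bigl\||\nabla u|^{p-1}\bigr\|$ that enters through the Caccioppoli estimate, and the outer exponent $\tfrac{n+1}{n-1}$ is the standard loss incurred by iterating the self-improvement across scales — the same loss that appears in $L^1$-data estimates of Boccardo--Gallou\"et type; passing between the exponents $n$ and $p$ in the various intermediate norms via $\|\nabla u\|_{L^n}\le|B|^{1/n-1/p}\|\nabla u\|_{L^p}$ finishes the bookkeeping.

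\textbf{Main obstacle.} The genuinely delicate point is the uniformity as $p\to n^+$. Near the critical exponent the dual exponents $n'$ and $p'$ are close but nested the ``wrong'' way ($L^{p'}$ is \emph{not} contained in $L^{n'}$ on a ball when $p>n$), so a direct Gehring iteration — or the Boccardo--Gallou\"et estimate applied to the $L^1$ right-hand side — produces an integrability gain $\sigma(p)$ that collapses to $0$ as $p\to n$. Overcoming this degeneration is the entire role of the Iwaniec--Sbordone very-weak-solutions machinery, and the analytic input that makes it applicable here is the combination of the compactness of $\n$ (which keeps the geometric right-hand side in $L^1$ with the correct scaling through the bound on $|u-u_{B_{2\rho}}|$) with the smallness hypothesis \eqref{eq:smallnessassumption}, whose small factor $\eps$ is what allows the critical term to be \emph{absorbed} rather than merely estimated. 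Checking that the resulting $\sigma$ and all constants are indeed independent of $p\in[n,p_1]$ — with $\eps$ fixed first, then $p_0$, then $p_1$, kept in that order throughout — is the heart of the argument.
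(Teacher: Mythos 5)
There is a genuine gap, and it lies precisely at the junction between your two steps. You propose to first derive a reverse H\"older inequality via Caccioppoli, and then ``feed this inequality into the Iwaniec--Sbordone stability estimate.'' But \cite[Theorem 4]{Iwaniec-Sbordone} is not a Gehring-type lemma that accepts a reverse H\"older inequality as input and returns a uniform integrability gain; it is a statement about very weak solutions to the $p$-Laplace system whose proof mechanism is a Hodge decomposition of the \emph{test function}. Concretely, what the paper does (and what the Iwaniec--Sbordone machinery actually requires) is to write $|\nabla\tilde u|^{\sigma}\nabla\tilde u=\nabla\varphi+B$ on a fixed ball, with $\|\nabla\varphi\|_{L^{\frac{p+\sigma}{1+\sigma}}}\lesssim\|\nabla\tilde u\|_{L^{p+\sigma}}^{1+\sigma}$ and, crucially, $\|B\|_{L^{\frac{p+\sigma}{1+\sigma}}}\lesssim\sigma\|\nabla\tilde u\|_{L^{p+\sigma}}^{1+\sigma}$, and then test the (localized) PDE against $\varphi$. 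The identity $\int|\nabla\tilde u|^{p+\sigma}=\int|\nabla\tilde u|^{p-2}\nabla\tilde u\cdot(\nabla\varphi+B)$ is what makes the target quantity $\|\nabla\tilde u\|_{L^{p+\sigma}}^{p+\sigma}$ appear on the left, and the factor $\sigma$ in the $B$-estimate is what lets the divergence-free part be absorbed. None of this is reached by your Caccioppoli step, which tests against $\eta^p(u-u_{B_{2\rho}})$ and only produces $\|\nabla u\|_{L^p}^p$ on the left --- the quantity you need to improve, not the improved one. Your ``Main obstacle'' paragraph correctly diagnoses why a straight Gehring iteration degenerates as $p\to n^+$, but the proposal then invokes Iwaniec--Sbordone as a black box without supplying the test-function construction that is its actual content.

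There is also a smaller but real problem inside Step~1. After H\"older with exponents $n$ and $n'$, the critical term becomes $\eps\,\bigl\|\,|\nabla u|^{p-1}\bigr\|_{L^{n'}(B_{2\rho})}=\eps\,\|\nabla u\|_{L^{(p-1)n'}(B_{2\rho})}^{p-1}$, and $(p-1)n'=\tfrac{(p-1)n}{n-1}$ is \emph{strictly larger} than $p$ whenever $p>n$. So this term is not of the form that can be absorbed into $\int_{B_\rho}|\nabla u|^p$, even after Young; it is a genuinely higher-integrability quantity on the larger ball, and controlling it is exactly the circular difficulty the lemma is supposed to break. In the paper this circularity is avoided because the roles are reversed: the higher quantity $\|\nabla\tilde u\|_{L^{p+\sigma}}^{p+\sigma}$ sits on the left by construction (it equals the pairing with $|\nabla\tilde u|^{\sigma}\nabla\tilde u$), the critical term carries the small factor $\eps=\|\nabla u\|_{L^n}$ \emph{and} a matching $\|\nabla\tilde u\|_{L^{p+\sigma}}^{p+\sigma}$ so it absorbs cleanly, and the remaining lower-order terms ($g$ and $\div F$, coming from the localization) close up via Young with exponents $\tfrac{1+\sigma}{p+\sigma}+\tfrac{p-1}{p+\sigma}=1$, which is where the stated power $\tfrac{n+1}{n-1}$ actually comes from (not from a multi-scale iteration). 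To repair your argument you would essentially have to replace Step~1 and Step~2 by the Hodge-decomposition test-function computation.
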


\begin{proof}
For simplicity of presentation, we assume that $B(x_0,10r)$ is a Euclidean ball $B(0,10)$\footnote{In the case of a geodesic ball in $\S^n$, one works in normal coordinates; the $n$-Laplacian then has smooth coefficients, and all arguments and estimates carry over unchanged.}.

First we localize. Let $\tilde{\eta} \in C_c^\infty(B(0,2))$ such that $\eta \equiv 1$ in $B(0,1)$ and set 
\[
 \tilde{u} \coloneqq \eta (u-(u)_{B(0,1)}).
\]
We multiply \eqref{eq:pharm} with $\eta^{p-1}$ and obtain a system valid on $\R^n$
\[
 \div(|\nabla u|^{p-2} \nabla u)\eta^{p-1} = |\nabla u|^{p-2} A(u)[\nabla u,\nabla u]\eta^{p-1} \quad \text{in $\R^n$}.
\]
Thus, it holds
\begin{equation}\label{eq:cutoff_syst}
\begin{split}
 \div(\eta^{p-1} |\nabla u|^{p-2} \nabla u) 
 &= |\nabla u|^{p-2} A(u)[\nabla u,\nabla u]\eta^{p-1} + (p-1) \eta^{p-2} \nabla \eta |\nabla u|^{p-2} \nabla u\\
 & = |\nabla \tilde{u}|^{p-2} A(u)[\nabla \tilde{u},\nabla u] + g,\\
 \end{split}
\end{equation} 
where the map $g$ verifies the following estimate
\begin{equation}\label{eq:estimateong}
 \|g\|_{L^{p'}(\R^n)} \aleq \|\nabla u\|_{L^p(B(0,2))}^{p-1},
\end{equation}
where the constant depends on $\|u\|_{L^\infty}$, that is on $\mathcal N$.

We define the following quantity
\[
 F\coloneqq |\nabla \tilde{u}|^{p-2} \nabla \tilde{u}- \eta^{p-1} |\nabla u|^{p-2} \nabla u .
\]
Since $p > 2$, the map $F$ satisfies the following estimate
\begin{equation}\label{eq:estimateonF}
 \|F\|_{L^{\frac{p}{p-2}}(\R^n)} \aleq \|\nabla u\|_{L^p(B(0,2))}^{p-2},
\end{equation}
where the constant depends on $\|u\|_{L^\infty}$, that is on $\mathcal N$.

Plugging $F$ into \eqref{eq:cutoff_syst}, we arrive at
\begin{equation} \label{eq:pharm2}
\begin{split}
 \div(|\nabla \tilde{u}|^{p-2} \nabla \tilde{u})& = |\nabla \tilde{u}|^{p-2} A(u)[\nabla \tilde{u},\nabla u] + g + \div F.
 \end{split}
\end{equation} 
Since by assumption $\tilde{u} \in W^{1,p_0}(B(0,3))$, then for some $0<\sigma \le p_0-p$ with $\sigma<1$, we can find (see \cite[Theorem 8.1]{Iwaniec} and \cite[Theorem 4]{Iwaniec-Sbordone}) via Hodge decomposition $\varphi \in W^{1,\frac{p+\sigma}{1+\sigma}}_0(B(0,3))$ and $B \in L^{\frac{p+\sigma}{1+\sigma}}(B(0,3))$ such that
\[
 |\nabla \tilde{u}|^{\sigma} \nabla \tilde{u} = \nabla \varphi + B \quad \text{in $B(0,3)$},
\]
with the estimates
\begin{equation} \label{eq:varphi}
 \|\nabla \varphi\|_{L^{\frac{p+\sigma}{1+\sigma}}(B(0,3))} \aleq \|\nabla \tilde{u}\|_{L^{p+\sigma}(B(0,3))}^{1+\sigma}
\end{equation} 
and 
\begin{equation} \label{eq:B}
 \|B\|_{L^{\frac{p+\sigma}{1+\sigma}}(B(0,3))} \aleq \sigma \|\nabla \tilde{u}\|_{L^{p+\sigma}(B(0,3))}^{1+\sigma}.
\end{equation}
The constants in inequalities \eqref{eq:varphi} and \eqref{eq:B} are independent of $p$ and $\sigma$, provided $p\in [n,n+1]$ and $\sigma\in[0,1]$. 

Since $\supp \tilde{u} \subset B(0,3)$, we have
\[
\begin{split}
 &\|\nabla \tilde{u} \|_{L^{p+\sigma}(\R^n)}^{p+\sigma}\\
 &=\int_{B(0,3)} |\nabla \tilde{u}|^{p-2} \nabla \tilde{u} \cdot \nabla \varphi + \int_{B(0,3)} |\nabla \tilde{u}|^{p-2} \nabla \tilde{u} \cdot B\\
 &\leq\abs{\int_{B(0,3)} |\nabla \tilde{u}|^{p-2} \nabla \tilde{u} \cdot \nabla \varphi} + \|\nabla \tilde{u} \|_{L^{p+\sigma}(B(0,3))}^{p-1} \|B\|_{L^{\frac{p+\sigma}{1+\sigma}}(B(0,3))}\\
 &\le C\abs{\int_{B(0,3)} |\nabla \tilde{u}|^{p-2} \nabla \tilde{u} \cdot \nabla \varphi} + C\sigma \|\nabla \tilde{u} \|_{L^{p+\sigma}(\R^n)}^{p+\sigma}.
 \end{split}
\]
The  constant $C$ is independent of $\sigma$. Thus for $\sigma = (2C)^{-1}$, we can absorb the second term on the right-hand side, and arrive at 
\begin{equation}\label{eq:ourestimate}
 \|\nabla \tilde{u} \|_{L^{p+\sigma}(\R^n)}^{p+\sigma} \aleq\abs{\int_{B(0,3)} |\nabla \tilde{u}|^{p-2} \nabla \tilde{u} \cdot \nabla \varphi},
\end{equation}
with constants depending on $n$, but uniform in $p \in [n,n+1]$. 
For the fixed $\sigma = (2C)^{-1}$ that is not going to change anymore, let $p_1 \in (n,n+1)$ be so that 
\[
\frac{p_1+\sigma}{1+\sigma} < n. 
\]
This is possible because $n \geq 2$. Then we have
\begin{equation} \label{eq:sup_p}
 \sup_{p \in [n,p_1]} \frac{p+\sigma}{1+\sigma} < n .
\end{equation} 
We continue with \eqref{eq:ourestimate} (where constants are independent of $p \in [n,p_1]$), using \eqref{eq:pharm2}, we obtain
\begin{equation}\label{eq:number1}
\begin{split} 
	\|\nabla \tilde{u} \|_{L^{p+\sigma}(\R^n)}^{p+\sigma}
	\aleq&\abs{\int_{B(0,3)} |\nabla \tilde{u}|^{p-2} \nabla \tilde{u} \cdot \nabla \varphi}\\
	\aleq&\int_{B(0,3)} |\nabla u| |\nabla \tilde{u}|^{p-1} \abs{\varphi}+\int_{B(0,3)} \abs{g} \abs{\varphi}
	+\int_{B(0,3)} \abs{F} \abs{\nabla \varphi}.
\end{split} 
\end{equation}
We apply Hölder inequality to the first term with exponents 
\[
\frac{1}{n} + \frac{p-1}{p+\sigma} + \frac{1}{q} = 1,
\]
where
\[
	\frac{1}{q} = 1-\frac{p-1}{p+\sigma} - \frac{1}{n} = \frac{1+\sigma}{p+\sigma} - \frac{1}{n}\leq 1.
\]
Hence, we obtain that $q = \left( \frac{p+\sigma}{1+\sigma}\right)^*$.
By the Sobolev--Poincar\'{e} inequality (which we can apply with constant independent of $p$ because \eqref{eq:sup_p} holds) and by \eqref{eq:varphi} we get
\begin{equation}\label{eq:number2}
\begin{split} 
	\int_{B(0,3)} |\nabla u| |\nabla \tilde{u}|^{p-1} \abs{\varphi} & \leq \|\nabla u \|_{L^n(B(0,3))}\, \|\nabla \tilde{u} \|_{L^{p+\sigma}(\R^n)}^{p-1} \, \|\varphi\|_{L^{\left( \frac{p+\sigma}{1+\sigma}\right)^* }(B(0,3))} \\
	 & \aleq \|\nabla u \|_{L^n(B(0,3))}\, \|\nabla \tilde{u} \|_{L^{p+\sigma}(\R^n)}^{p-1} \, \|\nabla \varphi\|_{L^{ \frac{p+\sigma}{1+\sigma} }(B(0,3))} \\
	 & \aleq \|\nabla u \|_{L^n(B(0,3))}\, \|\nabla \tilde{u} \|_{L^{p+\sigma}(\R^n)}^{p+\sigma}.
\end{split} 
\end{equation}
We also have $\left(\frac{p+\sigma}{1+\sigma}\right)^*>p$. Indeed,
the inequality is equivalent to $\frac{p+\sigma}{1+\sigma}>\frac{np}{n+p}$, which in turn is equivalent to 
\[
p^2+\sigma(n-(n-1)p)>0.
\]
Since $p\ge n\ge 2$, we have $n-(n-1)p\le 0$, and using $0<\sigma<1$ we obtain
\[
p^2+\sigma(n-(n-1)p)
\ge p^2+(n-(n-1)p)
= p(p-n)+n+p
>0.
\]
This proves the claimed inequality $\left(\frac{p+\sigma}{1+\sigma}\right)^*>p$.

Hence, we obtain using \eqref{eq:varphi} and then \eqref{eq:estimateong}
\begin{equation}\label{eq:number3}
\begin{split}
	\int_{B(0,3)} \abs{g} \abs{\varphi} 
	&\leq \|g\|_{L^{p'}(B(0,3))}\, \|\varphi \|_{L^p(B(0,3))} \\
	&\aleq \|g\|_{L^{p'}(B(0,3))}\, \|\nabla\varphi\|_{L^{\frac{p+\sigma}{1+\sigma}}(B(0,3))}\\
	&\aleq \|\nabla u\|_{L^p(B(0,2))}^{p-1}\|\nabla\tilde u\|_{L^{p+\sigma}(B(0,3))}^{1+\sigma}.
\end{split}
\end{equation}
To estimate the last term of \eqref{eq:number1}, we note that since $\sigma<1$ we have $\frac{p}{2} \le \frac{p+\sigma}{1+\sigma}$. Hence, from H\"{o}lder inequality, \eqref{eq:varphi}, and \eqref{eq:estimateonF} we obtain
\begin{equation}\label{eq:inequalityonF}
 \begin{split}
  \int_{B(0,3)} \abs{F} \abs{\nabla \varphi} 
  &\le \|F\|_{L^{\frac{p}{p-2}}(B(0,3))}\|\nabla \varphi\|_{L^{\frac{p}{2}}(B(0,3))} \aleq \|\nabla u\|_{L^p(B(0,2))}^{p-2} \|\nabla \tilde{u}\|_{L^{p+\sigma}(B(0,3))}^{1+\sigma}.
 \end{split}
\end{equation}
Combining \eqref{eq:number1} with \eqref{eq:number2}, \eqref{eq:number3}, and \eqref{eq:inequalityonF} we obtain
\[
\begin{split}
 \|\nabla \tilde{u} \|_{L^{p+\sigma}(\R^n)}^{p+\sigma} 
\aleq\ \|\nabla u\|_{L^n(B(0,3))} \|\nabla \tilde{u}\|_{L^{p+\sigma}(\R^n)}^{p+\sigma} + \brac{\|\nabla u\|_{L^{p}(B(0,3))}^{p-1} +\|\nabla u\|_{L^{p}(B(0,3))}^{p-2}} \|\nabla \tilde{u}\|_{L^{p+\sigma}(\R^n)}^{1+\sigma}.
 \end{split}
\]
With the smallness assumption \eqref{eq:smallnessassumption}, and Young's inequality for exponents $\frac{1+\sigma}{p+\sigma} + \frac{p-1}{p+\sigma} =1$ with $\delta>0$, we find for all $p \in [n,p_1]$
\[
\begin{split}
 \|\nabla \tilde{u} \|_{L^{p+\sigma}(\R^n)}^{p+\sigma} 
\leq 
&\ C(n,\mathcal{N}) \brac{\eps \|\nabla \tilde{u}\|_{L^{p+\sigma}(\R^n)}^{p+\sigma}
+ \delta \|\nabla \tilde{u}\|_{L^{p+\sigma}(\R^n)}^{p+\sigma}} \\
& + C(\delta,n,\mathcal{N}) \brac{\|\nabla u\|_{L^{p}(B(0,3))}^{p-1} +\|\nabla u\|_{L^{p}(B(0,3))}^{p-2}}^{\frac{p+\sigma}{p-1}}.
 \end{split}
\]
Now, since $\frac{p+\sigma}{p-1} \le \frac{n+1}{n-1}$ for $p\in[n,p_1]$, provided that $\delta$ and $\eps$ are small enough, we obtain
\[
\|\nabla \tilde{u} \|_{L^{p+\sigma}(\R^n)}^{p+\sigma} \aleq_{\delta,n} \brac{1+\|\nabla u\|_{L^{p}(B(0,3))}^{p-1}}^{\frac{n+1}{n-1}}.
\]
In particular, since $\eta \equiv 1$ in $B(0,1)$,
\[
\|\nabla u \|_{L^{n+\sigma}(B(0,1))}^{n+\sigma} \aleq_{\delta,n} \brac{1+\|\nabla u\|_{L^{p}(B(0,3))}^{p-1}}^{\frac{n+1}{n-1}}.
\]
\end{proof}

We combine \Cref{la:iwaniec} with the following result obtained in \cite[Theorem 4.1]{BDW20}. See \cite[Remark 4.2]{BDW20} for arbitrary dimension and systems. 
\begin{theorem}\label{th:anna}
Let $n \geq 2$. There exists $s_0 = s_0(n)> 0$ such that the following holds. Let $p \in [n,n+1]$, $f \in L_{loc}^{1}(B(0,2))$ and $u \in W^{1,p}(B(0,2),\R^N)$ be a solution to 
\[
 \div(|\nabla u|^{p-2} \nabla u) = f \quad \text{in $B(0,2) \subset \R^n$}.
\]
Then for any $s \in (0,s_0]$ and $c_0>1$ such that $\frac{np'}{n+(1-s)p'} \geq c_0$, there exists a constant $C = C(n,s,c_0)>0$ such that the following holds
\begin{equation}\label{eq:Annainequality}
 \left[ |\nabla u|^{p-2} \nabla u \right]_{W^{s,p'}(B(0,1))} \leq C\,\|f\|_{L^{\frac{np'}{n+(1-s)p'}}(B(0,2))} + C\, \|\nabla u\|_{L^{p}(B(0,2))}^{p-1}.
\end{equation}
\end{theorem}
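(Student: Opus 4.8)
The plan is to deduce \eqref{eq:Annainequality} from a fractional differentiability bound for the nonlinear strain $V(\nabla u)\coloneqq|\nabla u|^{\frac{p-2}{2}}\nabla u$ in $L^2$, and then to pass to $A(\nabla u)\coloneqq|\nabla u|^{p-2}\nabla u$. For $p\ge 2$ one has the standard algebraic relations
\[
 |V(a)-V(b)|^2\ \aeq\ (A(a)-A(b))\cdot(a-b)\ \aeq\ (|a|+|b|)^{p-2}|a-b|^2,\qquad |A(a)-A(b)|\ \aeq\ (|a|+|b|)^{\frac{p-2}{2}}\,|V(a)-V(b)|.
\]
Writing $\Delta_h g(x)\coloneqq g(x+h)-g(x)$, the second relation together with H\"older's inequality (exponents $\tfrac{2p}{p-2}$ and $2$) gives, on slightly smaller balls, $\|\Delta_h A(\nabla u)\|_{L^{p'}}\aleq\|\nabla u\|_{L^p}^{\frac{p-2}{2}}\|\Delta_h V(\nabla u)\|_{L^2}$; since $\|\Delta_h V(\nabla u)\|_{L^2}$ will itself carry a factor $\|\nabla u\|_{L^p}^{p/2}$, this explains the term $\|\nabla u\|_{L^p}^{p-1}$ in \eqref{eq:Annainequality}. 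So it suffices to prove $V(\nabla u)\in B^{s}_{2,p'}(B(0,1))$, i.e.\ that $\int_{|h|\le 1}\|\Delta_h V(\nabla u)\|_{L^2(B(0,1))}^{p'}\,|h|^{-n-sp'}\,\dif h$ is finite with the required bound, uniformly for $p\in[n,n+1]$.

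The basic tool is the difference--quotient Caccioppoli estimate. For $|h|$ a small fraction of $\dist(\cdot,\partial B(0,2))$, subtract the equations for $\tau_h u$ and $u$, test against $\eta^p\Delta_h u$ with $\eta$ a cutoff, and use monotonicity to bound the principal term below by $\int\eta^p|\Delta_h V(\nabla u)|^2$. Absorbing (via Young) the term in which the difference falls on $\eta$ and using $\|\Delta_h u\|_{L^p}\aleq|h|\,\|\nabla u\|_{L^p}$, one arrives at
\[
 \int\eta^p|\Delta_h V(\nabla u)|^2\ \aleq\ |h|^2\,\|\nabla u\|_{L^p(B(0,2))}^p\ +\ \big|\langle \Delta_h f,\,\eta^p\Delta_h u\rangle\big|.
\]
The first term is harmless; the difficulty is the inhomogeneous one. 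The naive bound $|\langle\Delta_h f,\eta^p\Delta_h u\rangle|\aleq\|f\|_{L^q}\|\Delta_h u\|_{L^{q'}}$, combined with the sharp difference estimate $\|\Delta_h u\|_{L^{q'}}\aleq|h|^{1-\frac np+\frac n{q'}}\|\nabla u\|_{L^p}=|h|^{s}\|\nabla u\|_{L^p}$ (pointwise in $h$ for $p>n$, in the endpoint Besov sense for $p=n$ — the exponent $s$ here is exactly what fixes the form of $q$ in the hypothesis), delivers only $\|\Delta_h V(\nabla u)\|_{L^2}\aleq|h|^{s/2}$: precisely half of the target. No redistribution of the H\"older exponents repairs this using merely $f\in L^q$.

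Recovering the missing half-derivative is the crux and is genuinely nonlinear; following \cite{BDW20} it comes from comparison with $p$-harmonic functions. Fix $B_R\subset B(0,2)$, let $v$ be $p$-harmonic in $B_R$ with $v-u\in W^{1,p}_0(B_R)$, test with $u-v$, and use $|\nabla(u-v)|^p\aleq|V(\nabla u)-V(\nabla v)|^2$ together with the Sobolev--Poincar\'e inequality on $W^{1,p}_0(B_R)$ (constant uniform in $p\in[n,n+1]$, scaling exponent again $s$) to get $\int_{B_R}|V(\nabla u)-V(\nabla v)|^2\aleq\|f\|_{L^q(B_R)}^{p'}R^{sp'}$. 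On the other hand $v$ is $C^{1,\alpha}$ with interior estimates uniform in $p\in[n,n+1]$, so $V(\nabla v)$ satisfies the excess decay $\barint_{B_\rho}|V(\nabla v)-(V(\nabla v))_{B_\rho}|^2\aleq(\rho/R)^{2\beta}\barint_{B_R}|V(\nabla v)-(V(\nabla v))_{B_R}|^2$ for some $\beta=\beta(n)>0$. Combining the two and applying a Campanato-type iteration lemma yields an oscillation decay of order $R^{sp'}$ for $V(\nabla u)$, which forces the admissibility condition $s_0 p'<n+2\beta$ — possible since $p'$ ranges in a compact set, so $s_0=s_0(n)$. Finally a Calder\'on--Zygmund stopping-time decomposition of $f$ at its level sets (concentrated part handled by the comparison estimate, bulk by the $C^{1,\alpha}$ theory) upgrades this uniform oscillation decay to the genuine Besov bound for $V(\nabla u)$, with dependence on $\|f\|_{L^q}$ restored to first order; reading off $[A(\nabla u)]_{W^{s,p'}(B(0,1))}$ via the pointwise relation above finishes the proof. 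The hypothesis $\frac{np'}{n+(1-s)p'}\ge c_0$ keeps $q$ bounded away from $1$, so the maximal-function and Calder\'on--Zygmund estimates used have constants depending only on $n,s,c_0$.

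I expect the inhomogeneous term — extracting the full fractional exponent $s$ from data that is merely in $L^q$ (rather than in a Morrey class), uniformly as $p\to n^+$ — to be the main obstacle; everything else is the $p$-Laplacian regularity toolbox (monotonicity, $C^{1,\alpha}$ regularity of $p$-harmonic functions, Gehring self-improvement) used in a routine way.
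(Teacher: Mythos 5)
You should know first that the paper does not prove \Cref{th:anna} at all: it is quoted verbatim from \cite[Theorem~4.1 and Remark~4.2]{BDW20}, and the entire ``proof'' in the paper consists of the remark that inequality \eqref{eq:Annainequality} follows from \cite[(4.2)]{BDW20} after a Sobolev embedding, together with the observation that the constants $s_0$ and $C$ can be traced through the proof in \cite{BDW20} so as to depend continuously on $p\in[n,n+1]$. Your proposal, therefore, is an attempt to reconstruct a substantial result of an external paper from scratch, not to reproduce an argument contained here. On the positive side, your reduction is correct: the algebraic relations between $A(\nabla u)=|\nabla u|^{p-2}\nabla u$ and $V(\nabla u)=|\nabla u|^{(p-2)/2}\nabla u$, the H\"older step with exponents $\tfrac{2p}{p-2}$ and $2$ (valid since $p'\le 2$ when $p\ge 2$), the book-keeping of the power $\|\nabla u\|_{L^p}^{p-1}$, and the identification that the data exponent $q=\tfrac{np'}{n+(1-s)p'}$ is exactly the one for which the Sobolev–Poincar\'{e} step produces the factor $|h|^{s}$, are all right. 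You also correctly diagnose that a single pass through the difference-quotient Caccioppoli inequality only yields $\|\Delta_h V(\nabla u)\|_{L^2}\aleq |h|^{s/2}$, half of the required exponent, and that no rearrangement of H\"older exponents alone with $f\in L^q$ repairs this.

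The gap is in how you propose to recover the missing half. Your comparison step is fine up to the point where you obtain $\int_{B_R}|V(\nabla u)-V(\nabla v)|^2\aleq \|f\|_{L^q(B_R)}^{p'}R^{sp'}$, and a Campanato iteration against the $C^{1,\alpha}$ decay of the $p$-harmonic map $v$ would indeed deliver $\int_{B_\rho}|V(\nabla u)-(V(\nabla u))_{B_\rho}|^2\aleq \rho^{sp'}$. But this is an $\mathcal L^{2,n+sp'}$ Campanato bound, i.e.\ $V(\nabla u)\in C^{0,sp'/2}$. Since $p'=p/(p-1)<2$ for all $p> 2$ (in particular for $p\ge n\ge 3$), one has $sp'/2<s$, so the H\"older exponent obtained this way is \emph{strictly worse} than $s$, and on a bounded domain $C^{0,\gamma}$ only embeds into $W^{\theta,p'}$ for $\theta<\gamma$. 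In other words, the naive Campanato output does not reach $W^{s,p'}$; there is a genuine deficit of $s(p-2)/(2(p-1))$. The entire burden of closing this gap is pushed onto the phrase ``a Calder\'on--Zygmund stopping-time decomposition of $f$ at its level sets ... upgrades this uniform oscillation decay to the genuine Besov bound'', which is precisely the step you flag as ``the main obstacle'' and never carry out. This is not a routine invocation — it is where the real work of \cite{BDW20} lives, and as written there is no reason to believe the exponents reassemble to give exactly $W^{s,p'}$ rather than $W^{sp'/2-\eps,p'}$. The proposal identifies the right difficulty but does not resolve it; the one thing the paper actually asserts about \Cref{th:anna} that is not a literature pointer — uniformity of the constants as $p\to n^+$ — is also addressed only in passing (``$p'$ ranges in a compact set'') rather than by tracking it through the iteration and the stopping-time construction where it is nontrivial.
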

In the above result, the dependence of the constants $s_0$ and $C$ can be chosen continuously in $p\in [n,n+1]$ by following the proof of \cite[Theorem 4.1]{BDW20}, taking into account \cite[Remark 4.2]{BDW20}. Additionally, inequality \eqref{eq:Annainequality} corresponds to inequality \cite[(4.2)]{BDW20} after applying the Sobolev embedding. 
Now \Cref{th:epsreg} follows from  
\Cref{th:anna} combined with \Cref{la:iwaniec}. 
\begin{proof}[Proof of \Cref{th:epsreg}]
Again, for simplicity of presentation, we may assume that $B(x_0,r)$ is a Euclidean ball $B(0,100)$.

Again, as in \Cref{s:prelim}, since the exponent $p_0$ from the statement of the theorem satisfies $p_0>n$, Morrey’s embedding together with \cite[Section~3]{HLp} (or \cite[Theorem 1.3]{M2025}) yields $u\in C^{1,\alpha}$ for some $\alpha>0$, and in particular $\nabla u\in L^\infty$.
Consequently, the parameter $p_0$ appearing in \Cref{la:iwaniec} can be chosen arbitrarily above $n$; for instance, taking $p_0=n+1$ in \Cref{la:iwaniec}, we obtain the existence of $\sigma\in(0,1)$ and $p_1\in(n,n+1)$ such that, if $p\in[n,p_1]$ we have
\[
\|\nabla u\|_{L^{n+\sigma}(B(0,20))} \aleq_\Lambda 1.
\]

Possibly shrinking $p_1$, we may further assume that $\frac{n+\sigma}{p}>\frac{2n+\sigma}{2n}>1$ for all $p\in[n,p_1]$.
With this choice, $u$ satisfies
\[
\div \bigl(|\nabla u|^{p-2}\nabla u\bigr)=f,
\]
where
\begin{equation}\label{eq:estimateonlittlef}
\|f\|_{L^{\frac{n+\sigma}{p}}(B(0,20))}
\aleq
\|\nabla u\|_{L^{n+\sigma}(B(0,20))}^{p}
\aleq_\Lambda 1.
\end{equation}
Let $s_0>0$ be given by \Cref{th:anna}. Shrinking $s_0$ if necessary, we may assume that
\[
0<s\le s_0
\quad\Longrightarrow\quad
s<\frac{n\sigma}{2n+\sigma}.
\]
Fix such an $s$. Then, for all $p\in[n,p_1]$,
\[
\frac{n}{n-s}\le \frac{2n+\sigma}{2n},
\qquad \text{and} \qquad 
\frac{np'}{n+(1-s)p'}\le \frac{2n+\sigma}{2n} < \frac{n+\sigma}{p}.
\]
Moreover, by possibly shrinking $p_1$ once more so that $p_1<\frac{n}{1-s}$, we ensure that
\[
\frac{np'}{n+(1-s)p'}>1
\qquad\text{for all } p\in[n,p_1].
\]
Hence, we first apply \eqref{eq:Annainequality}, then H\"{o}lder inequality, and finally \eqref{eq:estimateonlittlef}, which yields
\[
\bigl[\,|\nabla u|^{p-2}\nabla u\,\bigr]_{W^{s,p'}(B(0,15))}
\aleq_\Lambda
\|f\|_{L^{\frac{n+\sigma}{p}}(B(0,30))}+1
\aleq_\Lambda 1.
\]
Moreover, the following inequality holds for $p \geq 2$ (see for instance \cite[Lemma 2.3.7]{M})
\[
 \abs{|\nabla u|^{p-2}(x) \nabla u(x)-|\nabla u|^{p-2}(y) \nabla u(y)} \ageq_{p} \abs{\nabla u(x)-\nabla u(y)}^{p-1}.
\]
We find that
\[
 \int_{B(0,15)}\int_{B(0,15)} \frac{\abs{\nabla u(x)- \nabla u(y)}^{p}}{|x-y|^{n+\frac{s}{2(p-1)} p}}\, dx\, dy \aleq_{\Lambda}1. 
\]
Setting $t_0 = \frac{s}{2(p_1-1)} \leq \frac{s}{2(p-1)}$ we conclude.
\end{proof}

\section{Conclusion: Proof of Theorem~\ref{th:main}}\label{s:main}

\begin{proof}[Proof of \Cref{th:main}]
Let $p_0=n+1$ and let us take $\eps$ from \Cref{th:epsreg}. Recall that we assume that $k \geq 1$ (because  $k=0$ is known already). That is, we can apply \Cref{co:minmaxpharmmaps} to find a sequence $p_j \xrightarrow{j \to \infty} n^+$ and a sequence of nontrivial $p_j$-harmonic maps $u_{p_j} \in C^{1,\alpha_j}(\S^n,\n)$. There are two possible cases.

\textsc{Case 1:} \underline{There are no bubbles}, i.e., for all $x \in \S^n$ there exists $\rho_x > 0$ such that
 \[
  \limsup_{j \to \infty} \int_{B(x,\rho_x)} |\nabla u_{p_j}|^{n} < \eps^n.
 \]
In this case we cover the compact set $\S^n$ by finitely many balls $B(x,\rho_x/2)$, apply \Cref{th:epsreg} on each $B(x,\rho_x/2)$,  and from the embedding $W^{t_0,p_j}_{loc} \hookrightarrow W^{\frac{t_0}{2},n}_{loc}$ obtain\footnote{For $s\in(0,1)$ in general there is \emph{no} embedding 
$W^{s,p}_{\loc}\hookrightarrow W^{s,q}_{\loc}$ when $p>q$; see \cite{Mironescu-Sickel}.}
\[
 \sup_j [\nabla u_{p_j}]_{ W^{\frac{t_0}{2},n}(\S^n)} < \infty.
\]
Thus, up to a subsequence, 
\[
 u_{p_j} \to u \quad \text {strongly in } W^{1,n+\sigma} \text{ for some $\sigma > 0$.}
\]
By Vitali's convergence theorem we then have 
\[
 \int_{\S^n} |\nabla u|^n = \lim_{j \to \infty} \int_{\S^n} |\nabla u_{p_j}|^{p_j} >0.
\]
By the strong convergence, we can also pass to the limit in the equation \eqref{eq:pharmonic} and conclude that $u$ is an $n$-harmonic map. The map $u$ cannot be trivial since $\|\nabla u\|_{L^n} > 0$. Since $u \in W^{1,n+\gamma}$ we know that $u$ is H\"{o}lder continuous. Thus, as in \cite[Section 3]{HLp}, $u \in C^{1,\alpha}$ is a nontrivial $n$-harmonic map.

\textsc{Case 2}: \underline{There is an isolated bubble}, i.e., 
there exists $\bar{x} \in \S^n$ such that 
\begin{equation}\label{eq:eqeqe}
 \forall \varrho>0, \qquad \liminf_{j\to \infty} \int_{B(\bar{x},\varrho)} |\nabla u_{p_j}|^n \ge \eps^n
\end{equation}
and 
\begin{equation}\label{eq:eqeqebdd}
 \limsup_{j\to \infty} \int_{\S^n} |\nabla u_{p_j}|^n \leq M < \infty,
\end{equation}
and for some $R > 0$ and any $x \in \overline{B(\bar{x},R)} \setminus \{\bar{x}\}$ there exists $\rho_x > 0$ such that 
\begin{equation}\label{eq:eqeqebddv2}
 \limsup_{j\to \infty} \int_{B(x,\rho_x)} |\nabla u_{p_j}|^n \leq \eps^n  .
\end{equation}
Up to subsequence we then may conclude from \Cref{th:epsreg}
\begin{equation}\label{eq:limitingmap}
 u_{p_j} \xrightarrow{j \to \infty} u \quad \text{weakly in $W^{1,n}(\S^n)$},
\end{equation} 
and
\begin{equation}\label{eq:limitingmapstrong}
 u_{p_j} \xrightarrow{j \to \infty} u \quad \text{strongly in $W^{1,n}_{loc}\left(B(\bar{x},R) \setminus \{\bar{x}\}\right)$}.
\end{equation}

We argue as in \cite[Proof of Lemma~1]{Brezis_Coron_1985} and \cite[Section~5.2]{MMR} (see also \cite{Mou-Wang}).
Up to composition with a smooth diffeomorphism --- for instance the exponential map centered at $\bar x$ followed by a dilation --- we may assume that $u_{p_j}$ and $u$ are defined on the Euclidean unit ball $B(0,1)$, with $\bar x=0$ and $R=1$.
In these coordinates the maps are $p_j$-harmonic with respect to the pull-back metric $g$ induced by $\mathbb S^n$.
Since we are analyzing concentration at $\bar x=0$, the coefficients of $g$ converge in $C^\infty_{\mathrm{loc}}$ to the Euclidean metric $\delta_{ij}$ near $0$.
Throughout the remainder of this section we keep the metric $g$ implicit and do not record it explicitly in the $L^p$ norms.

For $\bar{R} \coloneqq \frac{1}{4} >0$, we define 
\[
 Q_j(\rho) \coloneqq \sup_{x\in B(0,\bar{R})} \int_{B (x,\rho)} |\nabla u_{p_j}|^{n}.
\]
We note that $Q_j$ is continuous, $Q_j(0)=0$, non-decreasing in $\rho$, and by \eqref{eq:eqeqe} for sufficiently large $j$
\[
\begin{split}
 Q_j(2\bar{R}) &= \sup_{x\in B  (0,\bar{R})}\int_{B (x,2\bar{R})} |\nabla u_{p_j}|^{n} \ge \int_{B (0,\bar{R})}|\nabla u_{p_j}|^{n}\ge \eps^{n}.
\end{split}
 \]

Thus, there exists  $\rho_j>0$ and $x_j \in \overline{B (0,\bar{R})}$ such that
\[
 Q_j(\rho_j) = \int_{B (x_j,\rho_j)} |\nabla u_{p_j}|^n = \frac{\eps^n}{2}.
\]

\underline{Claim:} $x_j \to 0$ and $\rho_j \to 0$.

Indeed, up to a subsequence, we may assume that $x_j \to \bar{y}\in \overline{B (\bar{x},\bar{R})}$ and $\rho_j \to \bar{\rho} \ge 0$. If $\bar{\rho}\neq 0$, then for sufficiently large $j$, it holds
\[
 \frac{\eps^n}{2} = Q_j(\rho_j) \ge \int_{B (0,\rho_j)}|\nabla u_{p_j}|^{n} \ge \int_{B (0,\frac{\bar{\rho}}{2})} |\nabla u_{p_j}|^{n} \ge \eps^n.
\]
This is a contradiction. Hence, we have $\rho_j\to 0 $.
Now, if $\bar{y}\neq \bar{x}$, then let $u\in W^{1,n}$ be the limiting map from \eqref{eq:limitingmap}. We choose $\rho\in (0,\rho_y)$ (where $\rho_y$ is defined in \eqref{eq:eqeqebddv2}) small enough so that
\[
 \int_{B (\bar{y},\rho)} |\nabla u|^n < \frac{\eps^n}{4} \quad \text{and}\quad \bar{x} \notin B (\bar{y},2\rho).
\]
We choose $j$ large enough so that $2(|\bar{y}-x_j|+\rho_j)\le \rho$. Then, it holds
\[
 \begin{split}
  \frac{\eps^{n}}{2} & = \int_{B (x_j,\rho_j)} |\nabla u_{p_j}|^{n} \le \int_{B \big(\bar{y}, 2(|\bar{y}-x_j|+\rho_j) \big)} |\nabla u_{p_j}|^{n} \le \int_{B (\bar{y},\rho)} |\nabla u_{p_j}|^{n}.
  \end{split}
\]
By \eqref{eq:limitingmapstrong} we have strong convergence outside of the singular set, and in particular in $B_{2\rho}(\bar{y})$. Hence, it holds
\[
\begin{split}
	\frac{\eps^{n}}{2}& \le \lim_{j\to +\infty} \int_{B (\bar{y},\rho)} |\nabla u_{p_j}|^{n} = \int_{B (\bar{y},\rho)} |\nabla u|^n < \frac{\eps^n}{4}.
\end{split}
\]
This is a contradiction, hence $x_j \to \bar{x}$, and the Claim is etablished.

Define now
\[
 \Omega_j \coloneqq \frac{  B (0,\bar{R}) -x_j }{\rho_j} \subset \R^n , \qquad v_j(x) = u_{p_j}\left( x_j + \rho_j\, x \right) \text{ for } x\in \Omega_j.
\]
Then we have $\Omega_j \to \R^n$ in the sense that for any $R>0$ there exists $j$ large enough so that $B (0,R) \subset \Omega_j$. Also $v_j\colon (\Omega_j ,g(x_j + \rho_j\cdot))\to \n$ is a $p_j$-harmonic map (because it satisfies the equation \eqref{eq:pharmonic}, which is scaling invariant).

For $j$ large enough $B (0,1)\subset \Omega_j$ and we have
\begin{equation}\label{eq:nontrivialenergy}
 \int_{B (0,1)} |\nabla v_j|^{n} = \int_{B (x_j,\rho_j)} |\nabla u_{p_j}|^{n} =  Q_j(\rho_j) = \frac{\eps^{n}}{2}.
\end{equation}
For any $x\in\R^n$ we have for $j$ large enough $B (x,1)\subset \Omega_j$ and since $\rho_j\to 0$ and $x_j \to 0$ there exists $y\in B (0,\bar{R})$ such that for sufficiently large $j$ we have $B (x_j+\rho_j x,\rho_j) \subset B (y,\rho_j)$. Hence, it holds
\[
 \int_{B (x,1)} |\nabla v_j|^{n} = \int_{B (x_j+\rho_j x,\rho_j)} |\nabla u_{p_j}|^{n} \le  \sup_{y\in B (0,\bar{R})}\int_{B (0,\bar{\rho}_j)} |\nabla u_{p_j}|^{n} = Q_j(\rho_j) =  \frac{\eps^{n}}{2}.
\]
Thus, for any $x\in \R^n$ and sufficiently large $j$ we have
\[
 \int_{B (x,1)} |\nabla v_j|^{n} \le \frac{\eps^{n}}{2}.
\]
Since this holds for any $x \in \R^n$, up to passing to a not relabelled subsequence, we can assume
\[
 v_j \xrightarrow{j \to \infty} \omega \quad \text{weakly in $W^{1,n}_{loc}(\R^n)$}.
\]
Moreover, from \Cref{th:epsreg}, combined with Sobolev embedding, we obtain that for every $x\in \R^n$
\[
 v_j \to \omega \text{ strongly in } W^{1,n+\sigma}\left(B (x,1/2 )\right).
\]
Since $v_j$ are $p_j$ harmonic maps, with the strong $W^{1, n+\sigma}-$ convergence we may pass to the limit in the equation of $v_j$ and obtain that the limiting map $\omega \in W^{1,n}_{loc}(\R^n,\n)$ also satisfies an $n$-harmonic map equation in $\R^n$.

Moreover, $\omega$ is non-trivial. Indeed, by the change of the variables (and the scaling invariance of the $n$-energy)
\[
\int_{B (0,1)}|\nabla \omega|^n = \lim_{j\to \infty} \int_{B (0,1)} |\nabla v_j|^n = \lim_{j\to \infty}\int_{B (x_j,\rho_j)} |\nabla u_{p_j}|^n = \lim_{j\to \infty} Q_j(\rho_j) = 
\frac{\eps^n}{2} \neq 0.
\]
In addition, we find that $\omega$ has finite energy. Indeed, for any $R>0$ we have
\[
 \int_{B (0,R)} |\nabla \omega|^n = \lim_{j\to \infty} \int_{B (0,R)} |\nabla v_j|^n = \lim_{j\to\infty} \int_{B (x_j,\rho_j R)} |\nabla u_{p_j}|^n \le C(n,M),
\]
where the last inequality is the uniform bound on the $n$-energy of $u_{p_j}$ from \eqref{eq:eqeqebdd}. Thus, it holds $\nabla \omega\in L^n(\R^n)$.
We also observe that as $W^{1,n+\sigma}$-limit $\omega$ is H\"{o}lder continuous, and since it solves the $n$-harmonic map 
equation it is actually $C^{1,\alpha}(\R^n,\n)$ for some $\alpha > 0$.

In order to find a nontrivial harmonic map $\tilde{\omega}\colon \S^n \to \n$ we argue as follows. Let $\tau_N\colon \S^n\setminus\{N\}\to \R^n$ be the stereographic projection from the north pole.
By conformal invariance of the $n$-harmonic map equation in the domain, the map $\tilde\omega \coloneqq \omega\circ \tau_N \colon \S^n\setminus\{N\}\to \n$
is $n$-harmonic on $\S^n\setminus\{N\}$ and satisfies
$\tilde\omega\in W^{1,n}(\S^n\setminus\{N\},\n)\cap C^{1,\alpha}(\S^n\setminus\{N\},\n)$.

To remove the singularity at $N$, consider the stereographic projection $\tau_S\colon \S^n\setminus\{S\}\to \R^n$ from the south pole and define $\omega_2 \coloneqq \tilde\omega\circ \tau_S^{-1}\colon \R^n\setminus\{0\}\to \n$.
Then $\omega_2$ is $n$-harmonic on $\R^n\setminus\{0\}$ and $
\omega_2=\omega\circ (\tau_N\circ \tau_S^{-1})=\omega\circ \kappa$, where $\kappa(x)=\frac{x}{|x|^2}$.
Since $\kappa$ is conformal and $n$ is the critical exponent, the $n$-energy is invariant under $\kappa$, and we have $\|\nabla \omega_2\|_{L^n(B(0,1))} = \|\nabla \omega\|_{L^n(\R^n\setminus B(0,1))}$.
\[
\int_{B(0,1)} |\nabla \omega_2|^n
= \int_{\R^n\setminus B(0,1)} |\nabla \omega|^n <\infty,
\]
so $\omega_2$ has finite $n$-energy in a neighborhood of the isolated singularity at $0$.
By the removable singularity theorem \cite[Theorem~5.1]{Mou-Yang-1996} (see also \cite{Duzaar-Fuchs}), $\omega_2$ extends to a map in
$C^{1,\alpha}(\R^n,\n)$ (hence also in $W^{1,n}_{\loc}(\R^n,\n)$).
Reverting the stereographic projection, we conclude that $\tilde\omega$ extends to an
$n$-harmonic map $\tilde\omega\in W^{1,n}(\S^n,\n)\cap C^{1,\alpha}(\S^n,\n)$.
Moreover, $\tilde\omega$ is non-constant since $\omega$ is non-constant.
\end{proof}

\bibliographystyle{abbrv}%
\bibliography{bib}%
\end{document}